\documentclass[a4paper,12pt]{article}
\usepackage{amssymb,amsthm,amsmath,amsfonts}
\usepackage{graphicx}
\usepackage{cite}
\usepackage{color}

\newtheorem{theorem}{Theorem}[section]

\newtheorem{corollary}[theorem]{Corollary}
\newtheorem{proposition}[theorem]{Proposition}

\theoremstyle{definition}
\newtheorem{definition}[theorem]{Definition}

\theoremstyle{remark}

\hyphenation{ope-ra-tor na-tu-ral pro-per-ties em-be-dded Mo-dels di-ffe-ren-tia-tion struc-tu- ral}
\begin{document}

\title{Bergman theory for the inhomogeneous Cimmino system}

\author{Jos\'e Oscar Gonz\'alez-Cervantes$^{(1)}$, Dante Arroyo-S\'anchez$^{(1)}$\\ and\\ Juan Bory-Reyes$^{(2)\footnote{corresponding author}}$ }
\small{
\vskip 1truecm
\date{\small $^{(1)}$ Departamento de Matem\'aticas, ESFM-Instituto Polit\'ecnico Nacional. 07338, Ciudad M\'exico, M\'exico\\ Emails: jogc200678@gmail.com, tlacaelela.s68@gmail.com\\$^{(2)}$ {SEPI, ESIME-Zacatenco-Instituto Polit\'ecnico Nacional. 07338, Ciudad M\'exico, M\'exico}\\Email: juanboryreyes@yahoo.com
}}

\maketitle
\begin{abstract}
\small{We first prove a Cauchy's integral theorem and Cauchy type formula for certain inhomogeneous Cimmino system from quaternionic analysis perspective. The second part of the paper directs the attention towards some applications of the mentioned results, dealing in particular with four kinds of weighted Bergman spaces, reproducing kernels, projection and conformal invariant properties.}
\end{abstract}

\noindent
\textbf{Keywords:} Cimmino system; Bergman spaces; Reproducing kernel; Conformally covariant and invariant properties\\
\textbf{AMS Subject Classification (2020):} 30E20, 30G35, 32A25, 32A36, 47B32. 

\section{Introduction}
Gianfranco Cimmino (12 March, 1908 - 30 May, 1989), was the first to consider the homogeneous first order linear partial differential equations (named in his honour {\it Cimmino system}):
\begin{equation}\label{CS}
\begin{cases}
\displaystyle \frac{\partial f_0}{ \partial x_ 0}  + \frac{\partial f_2}{ \partial x_2 } - \frac{\partial f_1}{ \partial x_1 } -\frac{\partial f_3}{ \partial x_3 } =    0  \\
\displaystyle \frac{\partial f_0}{ \partial x_ 1}  + \frac{\partial f_1}{ \partial x_0 } - \frac{\partial f_2}{ \partial x_3 } + \frac{\partial f_3}{ \partial x_2 } =    0   \\
\displaystyle\frac{\partial f_0}{ \partial x_2 }  + \frac{\partial f_3}{ \partial x_1 } - \frac{\partial f_1}{ \partial x_3 } -\frac{\partial f_2}{ \partial x_0 } =    0  \\
\displaystyle\frac{\partial f_0}{ \partial x_3 }  + \frac{\partial f_1}{ \partial x_2 } + \frac{\partial f_2}{ \partial x_1 } +\frac{\partial f_3}{ \partial x_0 } =  0,
\end{cases}
\end{equation}
for $f_m, m= 1, 2, 3, 4$ continuously differentiable $\mathbb R$-valued unknown functions in a domain $\Omega\subset \mathbb R^4$, all of whose solutions are harmonic functions, see \cite{c}.  

The solution theory of the Cimmino system can be seen as a generalization of the complex analysis of two complex variables. In \cite{dl} Dragomir and Lanconelli made a detailed investigation of Cimmino system, succeeded (among many other results) in extend most of the highlights from the holomorphic function theory to the Cimmino system theory, in particular they obtain conditions for the resolvability of the Dirichlet problem for Cimmino system.

Although nowadays the Cimmino system theory has been attacked with greater hope of success, see \cite{ABB, ABGS, Av, BAPS, CC, Xi}, the investigation of inhomogeneous Cimmino system is limited for right-hand sides depending only on the real variables $x_m, m = 1, 2, 3, 4$ , see \cite{W, WXQ}.

The history of Bergman theory goes back to the work of Stefan Bergman (5 May, 1895 – 6 June, 1977) in the early twentieth century, where the first systematic treatment of the subject of reproducing kernel for holomorphic functions of one or several complex variables was establish, see \cite{B} for the classical bibliography.

Since then, efforts were made to develop a similar theory of functions for different linear differential equations in a fixed domain. Some standard works here are \cite{BHZ, DS, HKZ} and the references therein, which contain a broad summary and historical notes of the subject.

At present, systematic investigation of Bergman theory, mainly applied quaternionic methods, is conducted for suitably chosen classes of systems of partial differential equations in a series of papers, see \cite{O, OJ, GM,GLM1,GLM2,GLM3} and references therein.

In this work we prove a Cauchy integral theorem and a Cauchy type formula for the following inhomogeneous Cimmino system: 
\begin{equation}\label{equa1}
\begin{cases}
\displaystyle \frac{\partial f_0}{ \partial x_ 0}  + \frac{\partial f_2}{ \partial x_2 } - \frac{\partial f_1}{ \partial x_1 } -\frac{\partial f_3}{ \partial x_3 } =    q_0    f_0 +  q_2     f_2 -  q_1   f_1 -  q_3   f_3 \\
\displaystyle \frac{\partial f_0}{ \partial x_ 1}  + \frac{\partial f_1}{ \partial x_0 } - \frac{\partial f_2}{ \partial x_3 } +\frac{\partial f_3}{ \partial x_2 } =    q_1   f_0 +  q_0    f_1 -  q_3   f_2 +  q_2     f_3  \\
\displaystyle \frac{\partial f_0}{ \partial x_2 }  + \frac{\partial f_3}{ \partial x_1 } - \frac{\partial f_1}{ \partial x_3 } -\frac{\partial f_2}{ \partial x_0 } =    q_2     f_0 +  q_1   f_3 -  q_3   f_1 -  q_0    f_3 \\
\displaystyle \frac{\partial f_0}{ \partial x_3 }  + \frac{\partial f_1}{ \partial x_2 } + \frac{\partial f_2}{ \partial x_1 } +\frac{\partial f_3}{ \partial x_0 } =     q_3   f_0 +  q_2     f_1 +  q_1   f_2 +  q_0  f_3.
\end{cases}
\end{equation}
for four desired continuously differentiable $\mathbb R$-valued functions $f_m$ depending on four real variables $x_m, m = 1, 2, 3, 4$ and a fix four-tuple $(q_0, q_1, q_2, q_3) \in \mathbb R^4$.

Together with the Cauchy integral theorem and Cauchy type formula versions alluded to earlier, we shall concern ourselves with some applications, in particular with four kinds of weighted Bergman spaces, reproducing functions, projections and conformal invariant properties.

\section{Preliminaries}
The skew-field  of quaternions, customarily denoted by $\mathbb H$ ((in honor of William Rowan Hamilton (August 4, 1805- September 2, 1865)) \cite{H}), is a four dimensional associative algebra over $\mathbb R$ with generators $1$ and the quaternionic imaginary units $e_1, e_2, e_3$ under the multiplication rules:  
\[e_1^2=e_2^2= e_3^2 =-1,  {} e_1e_2=e_3, e_2e_3=e_1,  e_3e_1=e_2.\]
For any $q\in \mathbb H$ it can be written as these linear combinations of $1, e_1, e_2$ and $e_3$, namely $q=q_0+ \vec{q}=q_0 + q_1e_1 + q_2e_2 +q_3e_3$, where $q_k\in\mathbb R$ for $k=0,1,2,3$. Note that a quaternion $q$ may also be viewed as a vector $(q_0, q_1, q_2, q_3) \in \mathbb R^4$. 

The rules for multiplication among $e_1, e_2$ and $e_3$ are enough, with the distributive law, to multiply all quaternions.

Quaternionic conjugation and the module of $q$ are defined respectively by $\bar q :=q_0 - \vec q $ and  $|q|:=\sqrt{ q\bar q }= \sqrt{q_0^2+ q_1^2+  q_2^2 + q_3^2}$.

The dot product of two quaternions $q,r\in\mathbb H$ is given by 
\[\langle  q,r\rangle:=\frac{1}{2}(\bar q r + \bar r q) = \frac{1}{2}(q \bar  r + r  \bar q).\] 

A  four-tuple $\psi=\{\psi_0, \psi_1,\psi_2,\psi_2\}\in\mathbb H^4$ is called structural set if $\langle  \psi_k, \psi_m\rangle  =\delta_{k,m} $, for $k,m=0,1,2,3$ and any quaternion $q$  can be rewritten as  $q_{\psi} := \sum_{k=0}^3 q_k\psi_k$, where $q_k\in\mathbb R$ for all $k$. Note that any structural set can be thought of as an orthonormal (in the usual Euclidean sense) basis in $\mathbb R^4$. The notion of structural set goes back as far as {\cite{No}}. 

Given $q, x\in \mathbb H$ we follow the notation used in {\cite{shapiro1}} to write
$$\langle q, x \rangle_{\psi}=\sum_{k=0}^3 q_k x_k,$$ 
where $q_{\psi} =\sum_{k=0}^3 q_k \psi_k $ and $x_{\psi} =\sum_{k=0}^3 x_k \psi_k$ with $q_k, x_k\in \mathbb R$ for all $k$.

Functions $f$ defined in a bounded domain $\Omega\subset\mathbb H\cong \mathbb R^4$ with value in $\mathbb H$ are considered. They may be written as: $f=\sum_{k=0}^3 f_k \psi_k$, where $f_k, k= 0,1,2,3,$ are $\mathbb R$-valued functions in $\Omega$. Properties as continuity, differentiability, integrability and so on, which as ascribed to $f$ have to be posed by all components $f_k$. We will follow standard notation, for example $C^{1}(\Omega, \mathbb H)$ denotes the set of continuously differentiable $\mathbb H$-valued functions defined in $\Omega$.  

Furthermore, the conformal mappings in $\mathbb R^4$ (also referred to as the M\"obius transformations), are represented by  
\begin{equation}\label{e4}
T(x)=(ax+b)(cx+d)^{-1},
\end{equation}
where  $a\neq 0$, if $c=0$ and $b-ac^{-1}d\neq 0$,  if $c\neq 0$, see \cite{Ah} for more details. 

In 1930's quaternionic functions of one quaternionic variable were investigated mainly by Karl Rudolf Fueter ((30 June, 1880 – 9 August, 1950) \cite{Fu}), who was able to develop the appropriate generalization of the Cauchy-Riemann equations to the quaternionic case. The Cauchy-Fueter equations (as they are now called in honor of their inventor) were not developed until 1935, almost a century after Hamilton's discovery of quaternions. This function theory (called {\it Quaternionic analysis}) has become popular again since 1970's, see e.g. the very well known paper \cite{Su} of  Anthony Sudbery (29 July, 1943 - Unknown). 

Nowadays, quaternionic analysis is regarded as a broadly accepted branch of classical analysis offering a successful generalization of complex analysis. It relies heavily around the concept of $\psi-$hyperholomorphic functions, i.e., null solutions of the so-called $\psi-$Fueter operator related to the structural set $\psi$.
\[{}^\psi\mathcal D= \frac{\partial}{\partial x_0} + \psi_1 \frac{\partial}{\partial x_1} +  \psi_2 \frac{\partial}{\partial x_2} + \psi_3 \frac{\partial}{\partial x_3},\]
which represents a natural generalization of the complex Cauchy-Riemann operator, see \cite{MS, S1, shapiro1} for more details.

In \cite{ABB, ABGS, BAPS}, motivated by \cite[Subsection 6.4-6.5]{W} it is shown how Cimmino system theory can be successfully embedded into that of $\psi$-hyperholomorphic functions for the simplest and therefore fundamental case of structural set 
$$\psi=\{1,e_1,-e_2,e_3\}.$$ 

The theory of $\psi$-hyperholomorphic functions has recently rekindled by the study of a quaternionic valued function theory associated to a domain $\Omega\subset \mathbb H$ and induced by the left- and the right-$ q $-$\psi$-Fueter type operators:   
$${}^{{\psi}}_{{q}}\mathcal D [f] := {}^{{\psi}}\mathcal D[f] +  q f \quad  \textrm{and}   \quad  \mathcal D^{{\psi}}_{{q}} [f] := \mathcal D^{{     \psi}}[f] + f  q, \quad f\in C^1(\Omega,\mathbb H),$$ 
which the first author of the present paper has studied in some detail \cite{O}. 

We will end this section by recalling relevant material from \cite{O} and provide some additional notations and terminology that will be used in the sequel, thus making our exposition self-contained. To this end, let us introduce the following notations: 
\begin{itemize}
\item $d\lambda^{\psi}_{q}:=d\lambda_{ q  }^{\psi}(x)=e^{2  \langle q, x \rangle_{\psi}} d\mu_x$,

\item $d{\mu}_x$ denotes the oriented volume element on $\Omega$

\item $\nu_{q}^\psi(x):= e^{2 \langle q, x \rangle_{\psi}} \sigma_x^{\psi}$,

\item $\sigma_x^{\psi}:= -sgn\psi \left( \sum_{k=0}^3 (-1)^k \psi_k d\hat{x}_k\right)$ stands for the quaternionic differential form of the 3 dimensional volume in $\mathbb R^4$ according  to $\psi$, with $d\hat{x}_i= dx_0 \wedge dx_1\wedge dx_2  \wedge  dx_3 $ omitting factor $dx_i$,

\item $sgn\psi$ is equal to $1$, or $-1$, provided that $\psi$ and $(1,e_1,e_2,e_3)$ have the same orientation or not.

\end{itemize}

In addition, we write $K^{\psi}_{ q  } (y-x) := e^{\langle q, y-x \rangle_{\psi}} K_{\psi} (y-x)$, where 
\begin{equation*}   
K_{\psi}(\tau- x ):=\frac{    1}{2\pi^2} \frac{ \overline{\tau_{\psi} - x_{\psi}}}{|\tau_{\psi} - x_{\psi}|^4},
\end{equation*}
for the $\ell$-$ q  $-$\psi$-hyperholomor\-phic Cauchy Kernel. 

In particular, when $\psi=\{1,e_1,-e_2,e_3\}$ we denote it briefly by 
\begin{align}\label{equa56}
 K_{q} (y-x) =  &  e^{  \langle  q   , y-x \rangle } K  (y-x) \nonumber \\ 
  = &  K^q_0(y-x) + K^q_1(y-x) e_1+ K^q_2(y-x) e_2 + K^q_3(y-x) e_3  \nonumber  \\ 
	=  &
  \left( K^q_0(y-x)  ,  K^q_1(y-x) ,  K^q_2(y-x) ,  K^q_3(y-x) \right).
\end{align}

\begin{proposition}\label{proplambda}
Let $\Omega\subset \mathbb H$ be a domain with a 3-dimensional compact smooth boundary, denoted by $\partial \Omega$ and fix $q\in\mathbb H$. Then 
\begin{align}
d(f  \nu_{ q  }^\psi(x)  g) = & ( \mathcal D^{\psi}_{ q  }[f] g  +   f {}^{\psi}_{ q  }\mathcal D[g] ) e^{2  \langle  q  ,x \rangle_{\psi}}, \nonumber\\
\label{STOkes_alpha_psi}\int_{\partial \Omega} f(x) \nu_{ q  }^\psi(x)  g(x) = &\int_{\Omega} ( \mathcal D^{\psi}_{ q  }[f](y) g(y)  +   f(y) {}^{\psi}_{ q  }\mathcal D[g](y) ) d\lambda_{ q  }^{\psi}(y) 
\end{align}
and
\begin{align}  &  \int_{\partial \Omega}  ( K^{\psi}_{ q  }( y -x)\sigma_{ y }^{\psi} f( y )  +  g( y ) \sigma_{ y }^{\psi} K^{\psi}_{ q  }( y -x) ) \nonumber  \\ 
&  - 
\int_{\Omega} (K^{\psi}_{ q  } (y-x)
  {}^{\psi}_{ q  }\mathcal D [f] (y)  +     \mathcal D^{\psi}_{ q  } [g] (y)   K^{\psi}_{ q  } (y-x)
     )d   {\mu}_y   \nonumber \\
		=  &    \label{ecuaNEWCauchy}  \left\{ \begin{array}{ll}  f(x) + g(x) , &  x\in \Omega,  \\ 0 , &  x\in \mathbb H\setminus\overline{\Omega},                    
\end{array} \right. 
\end{align} 
for all $f,g \in C^1(\overline{\Omega}, \mathbb H)$.
\end{proposition}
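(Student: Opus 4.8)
The plan is to obtain the three assertions in cascade, each reducing to the previous one plus a local computation. For the pointwise identity, observe first that its specialization to $q=0$ is the classical Stokes-type identity of quaternionic analysis, $d\big(f\,\sigma_x^{\psi}\,g\big)=\big(\mathcal D^{\psi}[f]\,g+f\,{}^{\psi}\mathcal D[g]\big)\,d\mu_x$ (see, e.g., \cite{MS, shapiro1}), so the only thing to determine is the effect of the scalar weight $e^{2\langle q,x\rangle_{\psi}}$ hidden in $\nu_q^{\psi}(x)=e^{2\langle q,x\rangle_{\psi}}\sigma_x^{\psi}$. Since $e^{2\langle q,x\rangle_{\psi}}$ is real-valued it commutes with $f$, $g$ and with $\sigma_x^{\psi}$, and the Leibniz rule for the exterior derivative gives
$$d\big(f\,\nu_q^{\psi}(x)\,g\big)=\big(d\,e^{2\langle q,x\rangle_{\psi}}\big)\wedge\big(f\,\sigma_x^{\psi}\,g\big)+e^{2\langle q,x\rangle_{\psi}}\,d\big(f\,\sigma_x^{\psi}\,g\big).$$
Using $d\,e^{2\langle q,x\rangle_{\psi}}=2\,e^{2\langle q,x\rangle_{\psi}}\sum_{k=0}^{3}q_k\,dx_k$, the definition of $\sigma_x^{\psi}$, and the wedge relations $dx_j\wedge d\hat x_k=\delta_{jk}(-1)^k\,d\mu_x$, one computes that the first summand equals $2\,e^{2\langle q,x\rangle_{\psi}}\,f\,q\,g\,d\mu_x$; this is exactly what is needed, since $\mathcal D_q^{\psi}[f]\,g+f\,{}^{\psi}_{q}\mathcal D[g]$ exceeds $\mathcal D^{\psi}[f]\,g+f\,{}^{\psi}\mathcal D[g]$ precisely by $f(qg)+(fq)g=2\,f\,q\,g$ (one copy from the $+qg$ in ${}^{\psi}_{q}\mathcal D$, one from the $+fq$ in $\mathcal D_q^{\psi}$ — which is where the factor $2$ in the exponent is consumed). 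Adding the two contributions yields the first displayed identity of the Proposition.

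The integral formula \eqref{STOkes_alpha_psi} then follows immediately: integrate the pointwise identity over $\Omega$ — legitimate because $\partial\Omega$ is a compact smooth hypersurface and $f,g\in C^{1}(\overline{\Omega},\mathbb H)$ — apply the ordinary Stokes theorem, and rewrite $e^{2\langle q,y\rangle_{\psi}}\,d\mu_y=d\lambda_q^{\psi}(y)$ on the right-hand side.

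For the Cauchy type formula \eqref{ecuaNEWCauchy} I would follow the classical Borel–Pompeiu argument. The key input, recorded in \cite{O}, is the hyperholomorphicity of the Cauchy kernel, which yields — together with the pointwise identity above and the classical fact that $K_{\psi}(y-x)$ is two-sided $\psi$-hyperholomorphic for $y\ne x$ — the identities
$$d\big(K_q^{\psi}(y-x)\,\sigma_y^{\psi}\,f(y)\big)=K_q^{\psi}(y-x)\,{}^{\psi}_{q}\mathcal D[f](y)\,d\mu_y,\qquad d\big(g(y)\,\sigma_y^{\psi}\,K_q^{\psi}(y-x)\big)=\mathcal D_q^{\psi}[g](y)\,K_q^{\psi}(y-x)\,d\mu_y,$$
valid for $y\ne x$. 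If $x\in\mathbb H\setminus\overline{\Omega}$ these hold throughout $\overline{\Omega}$; adding them, integrating over $\Omega$ and applying Stokes gives the value $0$. If $x\in\Omega$, apply the same on the punctured domain $\Omega_{\varepsilon}=\Omega\setminus\overline{B_{\varepsilon}(x)}$, whose boundary is $\partial\Omega$ together with the sphere $\partial B_{\varepsilon}(x)$ carried with the orientation induced as boundary of $\Omega_{\varepsilon}$, and let $\varepsilon\downarrow 0$. The volume integrals over $\Omega_{\varepsilon}$ converge to the corresponding integrals over $\Omega$ because $|K_q^{\psi}(y-x)|=O\big(|y-x|^{-3}\big)$ is locally integrable in $\mathbb R^{4}$ while ${}^{\psi}_{q}\mathcal D[f]$ and $\mathcal D_q^{\psi}[g]$ are bounded on $\overline{\Omega}$; the integrals over $\partial B_{\varepsilon}(x)$ converge to $-f(x)$ and $-g(x)$ respectively, by the uniform continuity of $f$ and $g$, the fact that $e^{\langle q,y-x\rangle_{\psi}}\to1$ as $y\to x$, and the normalization $\tfrac{1}{2\pi^{2}}$ (the surface area of the unit $3$-sphere) carried by $K_{\psi}$. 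Collecting terms yields \eqref{ecuaNEWCauchy}.

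The main obstacle is concentrated in the two genuinely non-formal steps. First, the wedge computation $\big(d\,e^{2\langle q,x\rangle_{\psi}}\big)\wedge\big(f\,\sigma_x^{\psi}\,g\big)$ has to be carried out with care: the prefactor $sgn\,\psi$ and the alternating signs $(-1)^k$ in the definition of $\sigma_x^{\psi}$, together with the orientation conventions for $d\mu_x$ and $d\hat x_k$, must conspire to produce exactly $2\,f\,q\,g\,d\mu_x$ (and not, say, a term involving $\bar q$, or one with the wrong sign). Second, the limit $\varepsilon\downarrow 0$ of the surface integral over $\partial B_{\varepsilon}(x)$ requires isolating the leading singular factor $K_{\psi}(y-x)$ of $K_q^{\psi}(y-x)$, showing that the smooth factor $e^{\langle q,y-x\rangle_{\psi}}$ and the increments $f(y)-f(x)$, $g(y)-g(x)$ contribute nothing in the limit, and checking that the surviving constant equals $1$. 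Everything else — the two applications of Stokes, the dominated passage to the limit in the volume integrals, and the exterior case $x\notin\overline{\Omega}$ — is routine.
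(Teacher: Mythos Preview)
Your argument is correct, but there is nothing in the paper to compare it against: Proposition~\ref{proplambda} appears in the Preliminaries section as a result \emph{quoted} from \cite{O} (``recalling relevant material from \cite{O}''), without proof. So the paper offers no approach of its own here.

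That said, your proof is the natural one and matches what one finds in the source literature. A small clarification on the step you flag as potentially delicate: when you derive the identity $d\big(K_q^{\psi}(y-x)\,\sigma_y^{\psi}\,f(y)\big)=K_q^{\psi}(y-x)\,{}^{\psi}_{q}\mathcal D[f](y)\,d\mu_y$, it is cleaner to invoke the \emph{unweighted} ($q=0$) Stokes identity $d(h\,\sigma_y^{\psi}\,f)=(\mathcal D^{\psi}[h]\,f+h\,{}^{\psi}\mathcal D[f])\,d\mu_y$ directly with $h(y)=K_q^{\psi}(y-x)=e^{\langle q,y-x\rangle_{\psi}}K_{\psi}(y-x)$, rather than the weighted first identity of the Proposition. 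Indeed $K_q^{\psi}(\cdot-x)$ is \emph{not} in $\ker\mathcal D_q^{\psi}$; one computes instead $\mathcal D^{\psi}\big[K_q^{\psi}(\cdot-x)\big](y)=K_q^{\psi}(y-x)\,q$, and this single $q$ combines with ${}^{\psi}\mathcal D[f]$ to produce ${}^{\psi}_{q}\mathcal D[f]$, exactly as needed. (Equivalently, if you prefer to route through the first identity, take $F(y)=e^{-\langle q,y+x\rangle_{\psi}}K_{\psi}(y-x)$, which \emph{does} satisfy $\mathcal D_q^{\psi}[F]=0$, and then $F\,\nu_q^{\psi}\,f=K_q^{\psi}(y-x)\,\sigma_y^{\psi}\,f$.) Your phrasing ``the hyperholomorphicity of the Cauchy kernel\dots together with the pointwise identity above'' slightly obscures which of these two equivalent routes you intend, but either one closes the argument.
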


\begin{proposition}\label{conformal}
Let $q, r \in\mathbb H$ and $\psi$ be a structural set. Suppose $\Omega,\Xi \subset \mathbb H$ be  two conformally equivalent domains such that $T(\Xi)=\Omega$ and set $y=T(x)$. Then  
\begin{align}\label{eq7}
{}^{\psi}_{ q }\mathcal D_x \circ [e^{  \langle   r   -  q  , x \rangle_{\psi}} A_T f\circ T ] =   
  e^{  \langle   r   -  q  , x \rangle_{\psi}}   (B_T\circ T)  (   {}^{\psi}_{\delta_{T,r}} \mathcal D_y [   f] ) \circ T ,
\end{align}
where 
\begin{align*}  A_{T}(x):= & \left\{ \begin{array}{l}
\bar d, \quad  \textrm{if {} }  c=0; \\
\\
  {
  \overline{V} \dfrac{\overline{c x V^{-1} + dV^{-1}
}}{|c x V^{-1} + dV^{-1}|^{4}} }, \quad
   \textrm{if {} } c\neq  0,
 \end{array}\right.  \quad x\in \Omega
 \end{align*}

  \begin{align*}	B_{T}(y):= &  \left\{
\begin{array}{l}
\bar a, \quad  \textrm{if {} }  c=0; \\
\\
  {- \bar c \overline{(y -ac^{-1})} |y -ac^{-1}|^{4}  },
    \quad \textrm{if {} } c\neq 0,
 \end{array}\right. \quad y\in \Xi,\\ 
\delta_{T,r}(y):= & \left\{ \begin{array}{ll}
 (\bar a )^{-1} r   \bar d, &   \textrm{if {} }  c=0; \\
\\  
\\   
- \dfrac{ (y-ac^{-1})c  r   \bar{ V} (y-ac^{-1})  }{  |c|^2 |y-ac^{-1}|^4 }, &  
   \textrm{if {} } c\neq  0,
 \end{array}\right. \quad  \forall y\in \Xi, 
\end{align*} 
where  
  $V=b-ac^{-1}d$  and  $y=T(x)$ for $x\in \Omega$. 
	\end{proposition}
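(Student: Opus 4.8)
The plan is to peel off the scalar weight $e^{\langle r-q,x\rangle_{\psi}}$ first, then to treat the affine case $c=0$ and the proper M\"obius case $c\neq 0$ in parallel, and in each case to separate an analytic ingredient --- the conformal covariance of the \emph{unperturbed} operator ${}^{\psi}\mathcal D$ along $y=T(x)$ --- from a purely algebraic one, the identification of $\delta_{T,r}$. Since $e^{\langle r-q,x\rangle_{\psi}}$ is $\mathbb R$-valued with $\partial_{x_k}e^{\langle r-q,x\rangle_{\psi}}=(r_k-q_k)\,e^{\langle r-q,x\rangle_{\psi}}$, applying the Leibniz rule in the definition of ${}^{\psi}\mathcal D$ and then adding the zero-order term of ${}^{\psi}_q\mathcal D$ gives, for every $h\in C^1(\Omega,\mathbb H)$,
\begin{equation*}
{}^{\psi}_{q}\mathcal D_x\bigl[e^{\langle r-q,x\rangle_{\psi}}\,h(x)\bigr]=e^{\langle r-q,x\rangle_{\psi}}\,{}^{\psi}_{r}\mathcal D_x[h](x)
\end{equation*}
(the weight contributes the extra zero-order factor $r-q$, which combines with the $q$ already present to give $r$). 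Choosing $h(x)=A_T(x)\,f(T(x))$ thus reduces \eqref{eq7} to the weight-free identity
\begin{equation*}
{}^{\psi}_{r}\mathcal D_x\bigl[A_T(x)\,f(T(x))\bigr]=B_T(T(x))\,\bigl({}^{\psi}_{\delta_{T,r}}\mathcal D_y[f]\bigr)(T(x)).
\end{equation*}

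Substituting ${}^{\psi}_{r}\mathcal D_x={}^{\psi}\mathcal D_x+r(\,\cdot\,)$ and ${}^{\psi}_{\delta_{T,r}}\mathcal D_y={}^{\psi}\mathcal D_y+\delta_{T,r}(\,\cdot\,)$ into this weight-free identity, and invoking the conformal covariance of the \emph{bare} operator,
\begin{equation*}
{}^{\psi}\mathcal D_x\bigl[A_T(x)\,h(T(x))\bigr]=B_T(T(x))\,\bigl({}^{\psi}\mathcal D_y[h]\bigr)(T(x)),\qquad h\in C^1(\Omega,\mathbb H)
\end{equation*}
(which is just the $q=r=0$ instance of \eqref{eq7}, and which I would quote from \cite{O} --- or re-derive by writing $T$ as a composition of affine maps with the inversion $\iota(x)=x^{-1}$, combining the chain rule, which handles the affine factors and gives $A_T=\bar d$, $B_T=\bar a$ when $c=0$, with the classical intertwining of ${}^{\psi}\mathcal D$ under $\iota$, and collecting weights), the weight-free identity collapses, after letting $f$ vary so that the common factor $f(T(x))$ drops out, to the purely algebraic relation
\begin{equation*}
r\,A_T(x)=B_T(T(x))\,\delta_{T,r}(T(x)),\qquad\text{i.e.}\qquad \delta_{T,r}(y)=B_T(y)^{-1}\,r\,A_T(T^{-1}(y)).
\end{equation*}

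It remains to verify this relation and read off $\delta_{T,r}$. For $c=0$ it is immediate: $B_T(y)^{-1}\,r\,A_T=(\bar a)^{-1}r\,\bar d$. For $c\neq 0$ put $V=b-ac^{-1}d$, so that $T(x)=ac^{-1}+V(cx+d)^{-1}$; writing $u:=T(x)-ac^{-1}=V(cx+d)^{-1}$ one has $cx+d=u^{-1}V$, hence $cxV^{-1}+dV^{-1}=(cx+d)V^{-1}=u^{-1}$. Feeding this into the formula for $A_T$ and using $\overline{u^{-1}}=u\,|u|^{-2}$ and $|u^{-1}|=|u|^{-1}$ gives $A_T(x)=\bar V\,u\,|u|^{2}$; inverting $B_T(y)=-\,\bar c\,\bar u\,|u|^{4}$ with $(\bar u)^{-1}=u\,|u|^{-2}$ and $(\bar c)^{-1}=c\,|c|^{-2}$ gives $B_T(y)^{-1}=-\,u\,c\,\bigl(|c|^{2}|u|^{6}\bigr)^{-1}$; and the product $B_T(y)^{-1}\,r\,A_T(x)$ then makes the powers of $|u|$ collapse and leaves exactly
\begin{equation*}
\delta_{T,r}(y)=-\,\frac{(y-ac^{-1})\,c\,r\,\bar V\,(y-ac^{-1})}{|c|^{2}\,|y-ac^{-1}|^{4}},
\end{equation*}
as asserted. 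Together with the bare covariance and the weight reduction from the first step, this proves \eqref{eq7}.

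The step I expect to be the main obstacle is the conformal covariance of the bare operator for $c\neq 0$: one must pin down the weight attached to the inversion $\iota$ with the correct placement of quaternionic conjugations and powers of the modulus, then carry it through the two affine factors while keeping track of the noncommutativity of $\mathbb H$, and the computation must remain valid for an arbitrary structural set $\psi$ rather than only for the distinguished choice $\{1,e_1,-e_2,e_3\}$. Once that identity is in hand, the remaining steps are the elementary quaternionic manipulations sketched above.
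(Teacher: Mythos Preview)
The paper does not actually prove this proposition: it appears in the Preliminaries section as material ``recalled from \cite{O}'' and is stated without proof, so there is no argument in the paper to compare against. I therefore assess your proposal on its own merits.

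Your strategy is correct. The weight-stripping identity
\[
{}^{\psi}_{q}\mathcal D_x\bigl[e^{\langle r-q,x\rangle_{\psi}}h\bigr]=e^{\langle r-q,x\rangle_{\psi}}\,{}^{\psi}_{r}\mathcal D_x[h]
\]
follows exactly as you say (and is implicit in the paper's own Proposition~\ref{HilertSpace}, item~2, where the map ${}_{q-r}^{\psi}S$ intertwines the two kernels). After that, splitting ${}^{\psi}_{r}\mathcal D$ and ${}^{\psi}_{\delta_{T,r}}\mathcal D$ into bare part plus zero-order term and invoking the unperturbed conformal covariance reduces everything to the algebraic relation $\delta_{T,r}(y)=B_T(y)^{-1}\,r\,A_T(T^{-1}(y))$. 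Your verification of this in both cases is clean and correct; in particular the $c\neq 0$ computation with $u=y-ac^{-1}$, giving $A_T=\bar V\,u\,|u|^{2}$ and $B_T(y)^{-1}=-uc/(|c|^{2}|u|^{6})$, collapses to the stated formula for $\delta_{T,r}$.

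The only point left implicit is the bare covariance of ${}^{\psi}\mathcal D$ under a general M\"obius map for an arbitrary structural set $\psi$. You rightly identify this as the substantive step. Note, however, that this is precisely the $q=r=0$ instance of the proposition itself, hence is contained in the very result being quoted from \cite{O}; versions for $\psi$-hyperholomorphic theory also appear in the literature the paper cites (e.g.\ \cite{shapiro1, GM}). So your argument is best read as a reduction of the general statement to its $r=0$ special case via the exponential intertwiner --- a perfectly legitimate and economical route.
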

Under the conditions stated above let us introduce two more piece of notation: 
	\begin{align} \label{coef1} C_T(x):= & \left\{
\begin{array}{ll}
   \dfrac{|a|^2}{|d|}d^{-1}, &   \textrm{ if  $c=0$, }
 \\
 \\
     \dfrac{|c|^2}{|V|}V^{-1}
   \dfrac{\overline{cxV^{-1}+ dV^{-1}}}{|cxV^{-1}+ dV^{-1}|^4},  & \textrm{ if $c\neq 0$, }
  \end{array} \right. \\
	\nonumber \\
\label{peso1}  \rho_{_T}(x):= & \left\{
\begin{array}{ll}
1,  &  \textrm{if  $c=0$,}
\\
\\
  \dfrac{1}{|c x V^{-1} + dV^{-1}|^2}, & 
 \textrm{ if   $c\neq 0$.}
\end{array} \right.
\end{align} 
 
\begin{definition}\label{deflambda}	Let $\Omega\subset \mathbb H$ be a domain. Let $\psi$ be a structural set and $q\in\mathbb H$. 
\begin{enumerate}
\item  The quaternionic right-linear space ${}^{\psi}_{q}\mathcal A(\Omega)= Ker {}^{\psi}_{q}\mathcal D \ \cap \ C^1(\Omega, \mathbb H) \ \cap \ \mathcal L_2(\Omega, \mathbb H)$ is called $\ell$-$ q  $-$\psi$-hyperholomorphic Bergman space associated to $\Omega$ and denote  
\begin{align*}
\|f\|_{{}^{\psi}_{q}\mathcal A(\Omega)}  = \left( \int_{\Omega}|f|^2d\mu  \right)^{\frac{1}{2}},
\end{align*}
for all $f\in {}^{\psi}_{q}\mathcal A(\Omega)$. By  $\mathcal A^{\bar \psi}_{\bar  q}(\Omega)= \mathfrak M^{\bar \psi}_{\bar q}(\Omega)  \cap \mathcal L_2(\Omega, \mathbb H)$ we mean the right-$\bar q  $-$\bar \psi$-hyperholomorphic Bergman space associated to $\Omega$. 
\item  The set  ${}_{{\lambda}_{ q  }^\psi}\mathcal A  (\Omega)$ consists of 		$f\in  {}^\psi_{ q  }\mathfrak M(\Omega) $ such that 
\begin{align*}
\|f\|_{{}_{{\lambda}_{ q  }^\psi}\mathcal A  (\Omega)}^2: =  \int_{\Omega} |f(x)|^2 d\lambda^{\psi}_{ q  }(x) <\infty.
\end{align*}
Define  			    
\begin{equation*}
\left\langle f,g\right\rangle_{{}_{{\lambda}_{q}^\psi}\mathcal A(\Omega)}:= \int_{\Omega} \bar f g d\lambda^{\psi}_{ q  }, \quad \quad
{}_{q}^\psi\mathcal S [f](x):= e^{\left\langle q, x \right\rangle_{\psi}}f(x), \quad  x\in \Omega, 
\end{equation*} 
for all $f,g\in {}_{{\lambda}_{ q  }^\psi}\mathcal A (\Omega)$.  

Clearly,  
${}_{\lambda_{0}^\psi}\mathcal A(\Omega) = {}_{0}^\psi\mathcal A(\Omega)= {}^\psi\mathcal A(\Omega)$, as quaternionic right-Hilbert spaces. 
\end{enumerate} 
\end{definition}

\begin{proposition}\label{HilertSpace} 
Let $\Omega\subset \mathbb H$ be a domain and set $q, r \in \mathbb H$. 
\begin{enumerate}
\item  The quaternionic right-linear space
$({}^{\psi}_{q}\mathcal A(\Omega), \  \langle  \cdot , \cdot  \rangle_{{}^{\psi}_{q}\mathcal A(\Omega)})$ is a quaternionic right-Hilbert space and 
\begin{align}\label{BergKern}f(x)=\int_{\Omega} {}^{\psi}_{q}\mathcal B_{\Omega}(x,\zeta )f(\zeta)d\mu_{\zeta}, \quad 
 f\in {}^{\psi}_{q}\mathcal A(\Omega),
\end{align} 
where ${}^{\psi}_{q}\mathcal B_\Omega(x,\cdot) $ is the Bergman kernel of ${}^{\psi}_{q}\mathcal A(\Omega)$. Its Bergman projection is given by          
\begin{align}\label{BergProj}  
{}^{\psi}_{q} \mathfrak B_{\Omega }[f](x):=\int_{\Omega}
{}^{\psi}_{q}\mathcal B_{\Omega }(x,\zeta) f(\zeta) d\mu_\zeta, \quad f\in\mathcal L_{2}(\Omega,\mathbb H).
\end{align}  
\item ${}_{ q-r }^{\psi} S:  {}_{   \lambda   _{ q }^\psi} \mathcal A(\Omega)\to  {}_{   \lambda   _{ r  }^\psi} \mathcal A(\Omega) $ is an isometric isomorphisms of quaternionic right-Hilbert spaces given by 
$${}_{ q-r }^{\psi} S [f](x) = e^{\langle q-r, x \rangle } f(x) ,\quad x\in \Omega$$
and 
\begin{align*} {}_{\lambda_{r}^\psi}\mathcal  B_{\Omega}(x,\xi)= &  e^{ \langle  q - r  , x +\xi  \rangle_{\psi}} {}_{ \lambda_{ q }^\psi}\mathcal B_{\Omega}(x,\xi) ,
\end{align*}
where  
${}_{\lambda_{r}^\psi}\mathcal  B_{\Omega}$ is the kernel of ${}_{\lambda_{r}^\psi} \mathcal A(\Omega)$.
\end{enumerate}
\end{proposition}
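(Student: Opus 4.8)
The plan is to derive part~1 from the Borel--Pompeiu type identity \eqref{ecuaNEWCauchy} together with the Riesz representation theorem for quaternionic right-Hilbert spaces, and part~2 from the intertwining of Proposition~\ref{conformal} specialised to the identity M\"obius transformation. The only analytically substantial point is an interior $\mathcal L_2$-to-$L^\infty$ estimate for null solutions of ${}^\psi_q\mathcal D$; everything else is bookkeeping with the real exponential weights and abstract Hilbert-space arguments.

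\emph{Part 1.} First, ${}^\psi_q\mathcal A(\Omega)$ is a right-$\mathbb H$-module because $\psi_k$ and real scalars commute, whence ${}^\psi_q\mathcal D[f\alpha]=({}^\psi_q\mathcal D[f])\alpha$, and $\langle f,g\rangle_{{}^\psi_q\mathcal A(\Omega)}=\int_\Omega\bar f g\,d\mu$ satisfies the right-quaternionic inner-product axioms. To see that it is closed in $\mathcal L_2(\Omega,\mathbb H)$, fix $a\in\Omega$, a ball $\overline{B(a,R)}\subset\Omega$, and a cutoff $\chi\in C_c^\infty(B(a,R))$ with $\chi\equiv1$ near $a$. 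Applying \eqref{ecuaNEWCauchy} on $B(a,R)$ to the pair $(\chi f,0)$ with $f\in{}^\psi_q\mathcal A(\Omega)$: the boundary term vanishes since $\chi=0$ near $\partial B(a,R)$, and by the Leibniz rule ${}^\psi_q\mathcal D[\chi f]=({}^\psi\mathcal D[\chi])f+\chi\,{}^\psi_q\mathcal D[f]=({}^\psi\mathcal D[\chi])f$, so that
\[f(a)=-\int_{B(a,R)}K^\psi_q(y-a)\,({}^\psi\mathcal D[\chi](y))\,f(y)\,d\mu_y.\]
Since $\nabla\chi$ is supported away from $a$, $K^\psi_q(\cdot-a)$ is smooth on its support, and the Cauchy--Schwarz inequality gives $|f(a)|\le C(a,R,\chi)\,\|f\|_{{}^\psi_q\mathcal A(\Omega)}$; differentiating under the integral sign yields an analogous bound for each derivative $\partial^\beta f(a)$, with constants locally uniform in $a$. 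Hence an $\mathcal L_2$-Cauchy sequence in ${}^\psi_q\mathcal A(\Omega)$ converges, together with all its derivatives, uniformly on compact subsets, so its limit is of class $C^1$, lies in $\mathrm{Ker}\,{}^\psi_q\mathcal D$, and represents the $\mathcal L_2$-limit; thus ${}^\psi_q\mathcal A(\Omega)$ is a quaternionic right-Hilbert space. The same estimate makes $f\mapsto f(x)$ a bounded right-linear functional for each $x\in\Omega$, so by the quaternionic Riesz theorem there is a unique $k_x\in{}^\psi_q\mathcal A(\Omega)$ with $f(x)=\langle k_x,f\rangle_{{}^\psi_q\mathcal A(\Omega)}=\int_\Omega\overline{k_x(\zeta)}\,f(\zeta)\,d\mu_\zeta$; putting ${}^\psi_q\mathcal B_\Omega(x,\zeta):=\overline{k_x(\zeta)}$ gives \eqref{BergKern}. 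Finally, writing $\mathfrak P$ for the orthogonal projection of $\mathcal L_2(\Omega,\mathbb H)$ onto the closed subspace ${}^\psi_q\mathcal A(\Omega)$, its self-adjointness together with $\mathfrak P k_x=k_x$ yields $(\mathfrak P F)(x)=\langle k_x,\mathfrak P F\rangle=\langle k_x,F\rangle=\int_\Omega{}^\psi_q\mathcal B_\Omega(x,\zeta)F(\zeta)\,d\mu_\zeta$, which is \eqref{BergProj}.

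\emph{Part 2.} Proposition~\ref{conformal} applied with $T=\mathrm{id}$ (so $c=0$, $a=d=1$, $b=0$, hence $A_T\equiv B_T\equiv1$ and $\delta_{T,r}=r$), after interchanging $q$ and $r$, gives the gauge identity ${}^\psi_r\mathcal D_x[e^{\langle q-r,x\rangle_\psi}f]=e^{\langle q-r,x\rangle_\psi}\,{}^\psi_q\mathcal D_x[f]$, so ${}_{q-r}^\psi S$ maps $\mathrm{Ker}\,{}^\psi_q\mathcal D$ into $\mathrm{Ker}\,{}^\psi_r\mathcal D$. Since $e^{\langle q-r,x\rangle_\psi}$ is a positive real scalar and $\langle q-r,x\rangle_\psi+\langle r,x\rangle_\psi=\langle q,x\rangle_\psi$,
\[\int_\Omega\bigl|{}_{q-r}^\psi S[f](x)\bigr|^2\,d\lambda^\psi_r(x)=\int_\Omega e^{2\langle q-r,x\rangle_\psi}|f(x)|^2 e^{2\langle r,x\rangle_\psi}\,d\mu_x=\int_\Omega|f(x)|^2\,d\lambda^\psi_q(x),\]
so ${}_{q-r}^\psi S:{}_{\lambda_q^\psi}\mathcal A(\Omega)\to{}_{\lambda_r^\psi}\mathcal A(\Omega)$ is a well-defined right-linear isometry with two-sided inverse ${}_{r-q}^\psi S$, hence an isometric isomorphism; in particular completeness of the weighted spaces follows by transporting the $q=0$ case of Part~1 through these isometries. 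For the kernels, set $m(x)=e^{\langle q-r,x\rangle_\psi}$ and let $k^{(q)}_x$, $k^{(r)}_x$ be the Riesz representatives of point evaluation in ${}_{\lambda_q^\psi}\mathcal A(\Omega)$, ${}_{\lambda_r^\psi}\mathcal A(\Omega)$, so that ${}_{\lambda_q^\psi}\mathcal B_\Omega(x,\xi)=\overline{k^{(q)}_x(\xi)}$ and ${}_{\lambda_r^\psi}\mathcal B_\Omega(x,\xi)=\overline{k^{(r)}_x(\xi)}$. For $g\in{}_{\lambda_r^\psi}\mathcal A(\Omega)$, writing $g={}_{q-r}^\psi S[f]$ with $f={}_{r-q}^\psi S[g]$ and using that ${}_{q-r}^\psi S$ is unitary and $m$ scalar,
\[g(x)=m(x)f(x)=m(x)\,\langle k^{(q)}_x,f\rangle_{{}_{\lambda_q^\psi}\mathcal A(\Omega)}=\bigl\langle m(x)\,{}_{q-r}^\psi S[k^{(q)}_x],\,g\bigr\rangle_{{}_{\lambda_r^\psi}\mathcal A(\Omega)},\]
whence uniqueness of the Riesz representative gives $k^{(r)}_x(\xi)=m(x)m(\xi)k^{(q)}_x(\xi)$; conjugating and using $m(x)m(\xi)=e^{\langle q-r,x+\xi\rangle_\psi}$ yields ${}_{\lambda_r^\psi}\mathcal B_\Omega(x,\xi)=e^{\langle q-r,x+\xi\rangle_\psi}\,{}_{\lambda_q^\psi}\mathcal B_\Omega(x,\xi)$.

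The step I expect to be the real work is the interior estimate in Part~1: turning the $q$-deformed Cauchy kernel representation into a genuine $\mathcal L_2\to L^\infty_{\mathrm{loc}}$ bound, with locally uniform constants and the corresponding bounds on all derivatives. Granting that, the Hilbert-space completeness, the existence of the reproducing kernel and Bergman projection, and the weighted transformation rules all follow by the formal arguments indicated above.
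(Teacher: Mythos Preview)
The paper does not actually prove this proposition: it appears in the preliminaries section as material ``recall[ed] \dots\ from \cite{O}'' and is stated without proof. There is therefore nothing in the present paper to compare your argument against.

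That said, your plan is sound and is in fact the standard route for such results. In Part~1, the cutoff-and-Borel--Pompeiu argument correctly produces the locally uniform $\mathcal L_2\to L^\infty$ bound (and, by differentiation under the integral, the corresponding bounds on derivatives), from which closedness of ${}^\psi_q\mathcal A(\Omega)$ in $\mathcal L_2$ and the Riesz-representation construction of ${}^\psi_q\mathcal B_\Omega$ follow exactly as you describe. In Part~2, invoking Proposition~\ref{conformal} with $T=\mathrm{id}$ is correct but heavier than necessary: the gauge identity ${}^\psi_r\mathcal D[e^{\langle q-r,\cdot\rangle_\psi}f]=e^{\langle q-r,\cdot\rangle_\psi}\,{}^\psi_q\mathcal D[f]$ follows directly from the Leibniz rule together with ${}^\psi\mathcal D[e^{\langle s,\cdot\rangle_\psi}]=s\,e^{\langle s,\cdot\rangle_\psi}$, since the exponential is real-valued and hence central. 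Your isometry computation and the kernel-transport argument via uniqueness of the Riesz representative are both correct.
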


\begin{proposition}\label{ref123456} 
Given $q, r\in\mathbb H$  and let $\psi \subset\mathbb H$ be a structural set. Assume $\Omega$ and $\Xi$ be two conformally equivalent domains with  $T(\Xi)= \Omega $. 

Let us define  $\gamma_{_{T,r,q}}(y)= e^{2\langle  r  - q   , T^{-1}(y) \rangle_{\psi}}$ for all $y\in \Omega$ and the function space 
$${{}^{\psi}_{\delta_{T,r}} \mathcal A_{\gamma_{_{T,r,q}}} (\Omega) } :=  Ker {}^{\psi}_{\delta_{T,r}}\mathcal D \  \cap \ C^1(\Omega, \mathbb H) \ \cap \ \mathcal L_{2,\gamma_{_{T,r,q}}}(\Omega, \mathbb H),$$  
where 
$$ {}^{\psi}_{\delta_{T,r}}\mathcal D [f]:= {}^{\psi}\mathcal D [f]+  {\delta_{T,r}}f, \quad  f\in C^1(\Omega, \mathbb H).$$
The function space ${{}^{\psi}_{\delta_{T,r}} \mathcal A_{\gamma_{_{T,r,q}}} (\Omega)}$ is equipped with the norm and the inner product inherited from the weighted space $\mathcal L_{2,\gamma_{_{T,r,q}}} (\Omega,\mathbb H)$: 
\begin{equation*} 
\|f\|_{{}^{\psi}_{\delta_{T,r}} \mathcal A_{\gamma_{_{T,r,q}}} (\Omega)}:= \left[\int_{\Omega} \|f\|^2 \gamma_{_{T,r,q}}d\mu\right]^{\frac{1}{2}}, \ \ \langle f,g \rangle_{{}^{\psi}_{\delta_{T,r}}\mathcal A_{\gamma_{_{T,r,q}}}(\Omega)}:= \int_{\Omega} \bar f g \gamma_{_{T,r,q}} d\mu,
\end{equation*}
for all $f,g \in {}^{\psi}_{\delta_{T,r}} \mathcal A_{\gamma_{_{T,r,q}}}(\Omega)$. 

On the other hand, set $ {}^{\psi}_{q} \mathcal A_{\rho_{_T}} (\Xi)  :=  Ker {}^{\psi}_{q}\mathcal D \ \cap \ C^1(\Xi, \mathbb H)  \  \cap  \ \mathcal L_{2,\rho_{_T} }(\Xi, \mathbb H)$. Then 
\begin{equation*}   
\|f\|_{{}^{\psi}_{q} \mathcal A_{\rho_{_T}}(\Xi)} =  \left[ \int_{\Xi} \|f\|^2 \rho_{_T} d\mu\right]^{\frac{1}{2}}, \ \ \langle f,g \rangle_{{}^{\psi}_{q} \mathcal A_{\rho_{_T}} (\Xi)} =  \int_{\Xi} \bar f g  \rho_{_T} d\mu,
\end{equation*}
for all $f,g \in {}^{\psi}_{q} \mathcal A_{\rho_{_T}} (\Xi)$. 

The quaternionic right-linear operator $\mathcal J:{{}^{\psi}_{\delta_{T, r}} \mathcal A_{\gamma_{_{T,r,q}}}(\Omega) } \to {}^{\psi}_{q} \mathcal A_{\rho_{_T}} (\Xi)$ given by
\begin{align}\label{ConfOper} 
\mathcal J[f](x)=  e^{\langle  r  - q  , x \rangle_{\psi}}C_T (x) f\circ T(x), \quad  x\in \Xi,
\end{align}
for all $f\in {{}^{\psi}_{\delta_{T,r}} \mathcal A_{\gamma_{_{T,r,q}}}(\Omega)}$ is  an isometric isomorphism of quaternionic right-Hilbert spaces. What is more, 
  \begin{align*}  {}^\psi_{ q  }\mathcal  B_{\Xi, \rho_{_T}}(x,\xi)= & e^{\langle  r  - q  , x+ \xi \rangle_{\psi}}    C_T(x) \ {}^\psi_{\delta_{T,r}}\mathcal B_{\Omega,{\gamma_{_{T,r,q}}}}(T(x), T(\xi))
   \overline{C_T(\xi)},
\end{align*}
where ${}^\psi_{q}\mathcal B_{\Xi, \rho_{_T}}$ and ${}^\psi_{\delta_{T,r}}\mathcal B_{\Omega,{\gamma_{_{T,r,q}}}}$ stand for the Bergman kernels of ${}^{\psi}_{q} \mathcal A_{\rho_{_T}}(\Xi)$ and ${ {}^{\psi}_{\delta_{T,r}} \mathcal A_{\gamma_{_{T,r,q}}} (\Omega)}$, respectively.
\end{proposition}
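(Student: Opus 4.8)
The plan is to show in turn that $\mathcal J$ is well defined on ${}^{\psi}_{\delta_{T,r}}\mathcal A_{\gamma_{_{T,r,q}}}(\Omega)$, that it is a surjective isometry onto ${}^{\psi}_{q}\mathcal A_{\rho_{_T}}(\Xi)$, and finally to read the kernel identity off the reproducing property carried through $\mathcal J$. For well-definedness, note first that since $T(\Xi)=\Omega$ is bounded the pole of $T$ lies outside $\Xi$, so $T$, $C_T$ and the exponential in \eqref{ConfOper} are real-analytic on $\Xi$ and $C_T$ is invertible there; hence $\mathcal J[f]\in C^1(\Xi,\mathbb H)$, and right $\mathbb H$-linearity is immediate because $C_T$ and the exponential act by left multiplication while the scalar passes through $f\circ T$. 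To see that $\mathcal J[f]\in Ker\,{}^{\psi}_{q}\mathcal D$ whenever $f\in Ker\,{}^{\psi}_{\delta_{T,r}}\mathcal D$, a short computation with $V=b-ac^{-1}d$ gives $C_T=\kappa A_T$ on $\Xi$, where $\kappa=|c|^2|V|^{-3}$ (resp.\ $\kappa=|a|^2|d|^{-3}$ when $c=0$) is a positive real constant; since ${}^{\psi}_{q}\mathcal D$ commutes with multiplication by the constant $\kappa$, formula \eqref{eq7} of Proposition~\ref{conformal} yields ${}^{\psi}_{q}\mathcal D[\mathcal J[f]]=\kappa\,e^{\langle r-q,\cdot\rangle_{\psi}}(B_T\circ T)\big({}^{\psi}_{\delta_{T,r}}\mathcal D[f]\big)\circ T=0$.

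Next I would prove the isometry, which also supplies the missing square-integrability. Expanding $|\mathcal J[f](x)|^2=e^{2\langle r-q,x\rangle_{\psi}}|C_T(x)|^2|f(T(x))|^2$ and substituting $y=T(x)$, with $d\mu_y=J_T(x)\,d\mu_x$ where $J_T>0$ is the Jacobian of $T$, and using $\gamma_{_{T,r,q}}(T(x))=e^{2\langle r-q,x\rangle_{\psi}}$, the asserted equality $\|\mathcal J[f]\|_{{}^{\psi}_{q}\mathcal A_{\rho_{_T}}(\Xi)}=\|f\|_{{}^{\psi}_{\delta_{T,r}}\mathcal A_{\gamma_{_{T,r,q}}}(\Omega)}$ reduces to the pointwise relation $|C_T(x)|^2\rho_{_T}(x)=J_T(x)$ on $\Xi$. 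This is checked directly: the linear conformal scaling factor of the M\"obius map $T$ equals $|c|\,|V|\,|cx+d|^{-2}$ (and $|a|\,|d|^{-1}$ when $c=0$), so $J_T$ is its fourth power, and one compares with $|C_T(x)|=|c|^2\big(|V|^2|cxV^{-1}+dV^{-1}|^3\big)^{-1}$ and $\rho_{_T}(x)=|cxV^{-1}+dV^{-1}|^{-2}$ read off from \eqref{coef1}--\eqref{peso1} (the case $c=0$ being immediate). Since the weights $\gamma_{_{T,r,q}}$ and $\rho_{_T}$ are bounded above and below on the bounded sets $\Omega$ and $\Xi$, the target spaces are quaternionic right-Hilbert spaces with bounded point evaluations, exactly as in Proposition~\ref{HilertSpace}(1), so that the phrase ``the Bergman kernel'' is justified.

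For surjectivity I would write down the inverse explicitly, $\mathcal K[g](y)=e^{\langle q-r,\,T^{-1}(y)\rangle_{\psi}}\,C_T\big(T^{-1}(y)\big)^{-1}\,g\circ T^{-1}(y)$ for $y\in\Omega$, and check by a one-line substitution that $\mathcal K\circ\mathcal J=\mathrm{id}$ and $\mathcal J\circ\mathcal K=\mathrm{id}$; since $\mathcal J$ is an isometry this forces $\mathcal K$ to take values in ${}^{\psi}_{\delta_{T,r}}\mathcal A_{\gamma_{_{T,r,q}}}(\Omega)$, so $\mathcal J$ is an isometric isomorphism. For the kernel formula, take $g\in{}^{\psi}_{q}\mathcal A_{\rho_{_T}}(\Xi)$ and put $f=\mathcal K[g]$, so $g=\mathcal J[f]$; applying the reproducing identity for ${}^{\psi}_{\delta_{T,r}}\mathcal B_{\Omega,\gamma_{_{T,r,q}}}$ to $f$ at $T(x)$, substituting $\eta=T(\xi)$ with $d\mu_\eta=J_T(\xi)\,d\mu_\xi=|C_T(\xi)|^2\rho_{_T}(\xi)\,d\mu_\xi$ by Step~2, inserting $f(T(\xi))=e^{-\langle r-q,\xi\rangle_{\psi}}C_T(\xi)^{-1}g(\xi)$ and $\gamma_{_{T,r,q}}(T(\xi))=e^{2\langle r-q,\xi\rangle_{\psi}}$, and collecting the real scalar factors (in particular $C_T(\xi)^{-1}|C_T(\xi)|^2=\overline{C_T(\xi)}$, since $|C_T(\xi)|^2$ is real and $C_T\overline{C_T}=|C_T|^2$), one arrives at
\[
g(x)=\int_{\Xi}e^{\langle r-q,\,x+\xi\rangle_{\psi}}\,C_T(x)\,{}^{\psi}_{\delta_{T,r}}\mathcal B_{\Omega,\gamma_{_{T,r,q}}}\big(T(x),T(\xi)\big)\,\overline{C_T(\xi)}\;g(\xi)\,\rho_{_T}(\xi)\,d\mu_\xi ,
\]
and uniqueness of the reproducing kernel of ${}^{\psi}_{q}\mathcal A_{\rho_{_T}}(\Xi)$ gives the claimed formula for ${}^{\psi}_{q}\mathcal B_{\Xi,\rho_{_T}}$.

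The main obstacle I expect is twofold. On the analytic side it is the pointwise identity $|C_T|^2\rho_{_T}=J_T$ of Step~2, which hinges on correctly recalling the Jacobian of a quaternionic M\"obius transformation; on the algebraic side it is the noncommutative bookkeeping in the kernel computation -- keeping $C_T(x)$, $\overline{C_T(\xi)}$, the exponential multipliers and $g(\xi)$ in their correct left/right positions -- which must be carried out consistently with the convention, fixed in Proposition~\ref{HilertSpace}, that the reproducing kernel multiplies the function on the left and the weight on the right inside the integral.
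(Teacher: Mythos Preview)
The paper does not actually prove this proposition: it appears in the Preliminaries section among the results explicitly ``recalled'' from reference~[14] (the first author's earlier paper on the $\ell$-$\alpha$-$\psi$-hyperholomorphic Bergman space), and is stated there without proof. So there is no proof in the present paper to compare against.

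That said, your argument is the natural one and is essentially correct. The identification $C_T=\kappa A_T$ with $\kappa=|c|^2|V|^{-3}$ (resp.\ $|a|^2|d|^{-3}$) is right, and combined with \eqref{eq7} it gives the kernel inclusion. Your Jacobian identity $|C_T|^2\rho_{_T}=J_T$ checks out: with $|cxV^{-1}+dV^{-1}|=|cx+d|/|V|$ one gets $|C_T|^2\rho_{_T}=|c|^4|V|^{-4}|cxV^{-1}+dV^{-1}|^{-8}$, while the conformal factor of $T$ is $|c||V||cx+d|^{-2}$, whose fourth power matches. The derivation of the kernel transformation law via the reproducing property and the change of variables, using $\overline{C_T(\xi)}=C_T(\xi)^{-1}|C_T(\xi)|^2$, is also fine and respects the left/right conventions fixed in Proposition~\ref{HilertSpace}.

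One small caution: you invoke boundedness of $\Omega$ and $\Xi$ in two places (to locate the pole of $T$ outside $\Xi$, and to bound the weights above and below so as to inherit the Hilbert-space structure and point evaluations). The proposition as stated does not assume boundedness. The first use is unnecessary---the pole of $T$ cannot lie in $\Xi$ simply because $T(\Xi)=\Omega\subset\mathbb H$---but the second deserves a sentence: either restrict to bounded domains (as the applications in Section~3 effectively do), or argue completeness and the existence of the reproducing kernel directly from the local estimates that underlie Proposition~\ref{HilertSpace}(1), which go through for any positive continuous weight.
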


\begin{corollary}\label{relationships} Let $\Omega\subset\mathbb H$ be a domain. 
\begin{enumerate}
\item If $\Omega \subset \{ x \in \mathbb H \  \mid \  \left\langle q  , x \right\rangle_{\psi} <0  \}$
then $  {}_{ q  }^\psi\mathcal A (\Omega)\subset {}_{\lambda_{ q  }^\psi}\mathcal A (\Omega) $ as function sets. 
\item  If $\Omega \subset \{ x \in \mathbb H \  \mid \  \left\langle q  , x \right\rangle_{\psi} > 0 \}$
then   $  {}_{   \lambda   _{ q  }^\psi}\mathcal A (\Omega)  \subset   {}_{ q  }^\psi\mathcal A (\Omega) $ as function sets. 
\item If $\Omega$ is a bounded domain then  ${}_{\lambda_{q}^\psi}\mathcal A (\Omega)= {}_{q}^\psi \mathcal A (\Omega)$
as function sets.  
\end{enumerate}
\end{corollary}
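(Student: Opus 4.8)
The plan is to observe that all three items concern integrability only. By Definition~\ref{deflambda} the spaces ${}_{q}^\psi\mathcal A(\Omega)$ and ${}_{\lambda_{q}^\psi}\mathcal A(\Omega)$ consist of functions from one and the same class, the $C^1$ $\ell$-$q$-$\psi$-hyperholomorphic functions ${}^\psi_{q}\mathfrak M(\Omega)$, and they are distinguished solely by which square-integrability requirement is imposed: $\int_\Omega|f|^2\,d\mu<\infty$ for the former and $\int_\Omega|f|^2 e^{2\langle q,x\rangle_\psi}\,d\mu<\infty$ for the latter. Thus the proof reduces entirely to comparing the weight $x\mapsto e^{2\langle q,x\rangle_\psi}$ with the constant $1$ on $\Omega$; the defining differential condition $f\in Ker\,{}^{\psi}_{q}\mathcal D\cap C^1(\Omega,\mathbb H)$ is untouched and carries over verbatim.

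For item~(1) I would note that $\Omega\subset\{x:\langle q,x\rangle_\psi<0\}$ gives $0<e^{2\langle q,x\rangle_\psi}<1$ on $\Omega$, whence for any $f\in{}_{q}^\psi\mathcal A(\Omega)$ we get $\|f\|_{{}_{\lambda_{q}^\psi}\mathcal A(\Omega)}^2=\int_\Omega|f|^2 e^{2\langle q,x\rangle_\psi}\,d\mu\le\int_\Omega|f|^2\,d\mu=\|f\|_{{}_{q}^\psi\mathcal A(\Omega)}^2<\infty$, so $f\in{}_{\lambda_{q}^\psi}\mathcal A(\Omega)$. Item~(2) is the mirror statement: on $\Omega\subset\{x:\langle q,x\rangle_\psi>0\}$ one has $e^{2\langle q,x\rangle_\psi}>1$, so $\|f\|_{{}_{q}^\psi\mathcal A(\Omega)}\le\|f\|_{{}_{\lambda_{q}^\psi}\mathcal A(\Omega)}$ and finiteness of the weighted norm forces finiteness of the unweighted one, yielding ${}_{\lambda_{q}^\psi}\mathcal A(\Omega)\subset{}_{q}^\psi\mathcal A(\Omega)$.

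For item~(3), since $\langle q,x\rangle_\psi=\sum_{k=0}^3 q_k x_k$ is the Euclidean inner product of the coordinate vectors of $q$ and $x$, Cauchy--Schwarz gives $|\langle q,x\rangle_\psi|\le|q|\,|x|$; as $\Omega$ is bounded, $M:=|q|\sup_{x\in\Omega}|x|<\infty$, hence $e^{-2M}\le e^{2\langle q,x\rangle_\psi}\le e^{2M}$ throughout $\Omega$. The two $\mathcal L_2$-norms are therefore equivalent, $\int_\Omega|f|^2\,d\mu<\infty\iff\int_\Omega|f|^2 e^{2\langle q,x\rangle_\psi}\,d\mu<\infty$, and the underlying function sets coincide. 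Note that items~(1) and~(2) on their own cannot produce equality on a nonempty domain, since their hypotheses on $\Omega$ are mutually exclusive there; the bounded case genuinely needs the two-sided bound just exhibited.

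I do not anticipate a real obstacle. The only points deserving a word of care are: (a) making explicit that both spaces carry the same regularity and hyperholomorphy condition, so that the inclusions are genuinely inclusions of function sets once the elementary integral comparisons are done; and (b) emphasising that in item~(3) the weighted and unweighted norms are only equivalent, not equal, so that the identification is as sets, while the norms remain distinct.
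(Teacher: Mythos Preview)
Your argument is correct and is precisely the natural one: the two spaces share the same hyperholomorphy and regularity conditions, so the inclusions reduce to comparing the weight $e^{2\langle q,x\rangle_\psi}$ with the constant $1$; the sign hypothesis handles items~(1) and~(2), and boundedness of $\Omega$ gives two-sided bounds on the weight for item~(3). Note that the paper itself does not supply a proof of this corollary---it is stated in the preliminaries as a result recalled from reference~\cite{O}---so there is no in-paper proof to compare against; your write-up would serve perfectly well as the omitted justification.
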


\section{Main results}
It is illuminating worth noting that while the equation ${}^{\psi}\mathcal D[f]=0$, with $\psi=\{1,e_1,-e_2,e_3\}$, is equivalent to the Cimmino system \eqref{CS}, the inhomogeneous Cimmino system \eqref{equa1} is obtained from ${}^{\psi}_{q}\mathcal D[f]=0 $ with $q= q_0 + q_1 e_1 - q_2 e_2 + q_3 e_3\in \mathbb H$ for the same structural set, so they are of great importance for the purposes of this article.

Throughout what follows, the main attention is given to the singular and important case of structural set $\psi=\{1,e_1,-e_2,e_3\}$.

The main results of the presented paper are formulated in the following theorems.

\begin{theorem}\label{CauchyTypeTeorem} (Cauchy's integral theorem)
Let $\Omega\subset \mathbb R^4$ be a domain with a 3-dimensional compact smooth boundary, denoted by $\partial \Omega$. If $f =(f_0,f_1,f_2,f_3)$ sa\-tis\-fies \eqref{equa1} then

\begin{align*} 
     \int_{\partial \Omega}     e^{2\langle q, x \rangle } \left[    d\hat{x}_0   f_0 (x) +  d\hat{x}_1  f_1(x) + d\hat{x}_2   f_2(x)  +  d\hat{x}_3 f_3(x)  \right]   = & 0  , \\ 
\int_{\partial \Omega}    e^{2\langle q, x \rangle } \left[ - d\hat{x}_1  f_0(x)  +  d\hat{x}_0   f_1(x) +  d\hat{x}_3 f_2(x)   - d\hat{x}_2   f_3(x)  \right]  = & 0, \\
\int_{\partial \Omega}      e^{2\langle q, x \rangle }\left[   - d\hat{x}_2   f_0(x) +  d\hat{x}_1  f_3(x) + d\hat{x}_3 f_1 (x) +  d\hat{x}_0   f_2 (x)  \right]   = & 0, \\ 
 \int_{\partial \Omega}     e^{2\langle q, x \rangle }\left[  -  d\hat{x}_3 f_0(x) + d\hat{x}_2   f_1(x) -  d\hat{x}_1  f_2(x) +  d\hat{x}_0   f_3(x)   \right]   = & 0 .\end{align*}
\end{theorem}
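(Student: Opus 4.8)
The plan is to recognize that the inhomogeneous Cimmino system \eqref{equa1} is precisely the scalar form of the quaternionic equation ${}^{\psi}_{q}\mathcal D[f] = 0$ for the structural set $\psi = \{1, e_1, -e_2, e_3\}$ and $q = q_0 + q_1 e_1 - q_2 e_2 + q_3 e_3$, as noted just before the statement, and then to invoke the Stokes-type identity \eqref{STOkes_alpha_psi} from Proposition~\ref{proplambda} with the constant function $g \equiv 1$. First I would write out $\sigma_x^{\psi}$ explicitly for $\psi = \{1, e_1, -e_2, e_3\}$ using the defining formula $\sigma_x^{\psi} = -\,\mathrm{sgn}\,\psi \left( \sum_{k=0}^3 (-1)^k \psi_k \, d\hat{x}_k \right)$, keeping track of the sign $\mathrm{sgn}\,\psi$ (which is $-1$ here since swapping $e_2 \to -e_2$ reverses orientation). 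Then, taking $g = 1$ so that ${}^{\psi}_{q}\mathcal D[g] = q$ and $\mathcal D^{\psi}_{q}[1] = q$ as well, the right-hand side of \eqref{STOkes_alpha_psi} with $f$ replaced by a solution and with the roles arranged so that the interior term vanishes — more precisely, applying the identity to $g=1$ and the given $f$, the bulk integrand $\mathcal D^{\psi}_q[f]\cdot 1 + f\cdot {}^{\psi}_q\mathcal D[1]$ is not what we want; instead I would apply \eqref{STOkes_alpha_psi} in the form with $f$ the solution and $g \equiv 1$ swapped appropriately, or simply use the version ${}^{\psi}_{q}\mathcal D[f]=0 \Rightarrow \int_{\partial\Omega} \nu_q^{\psi}(x) f(x) = 0$, which follows from \eqref{STOkes_alpha_psi} by choosing the left factor to be the constant $1$ and noting $\mathcal D^{\psi}_q[1]\,f$ does not appear — the cleaner route is the left-sided Stokes formula $\int_{\partial\Omega} \sigma_x^{\psi} f = \int_\Omega {}^{\psi}\mathcal D[f]\,d\mu$, whose $q$-weighted analogue gives $\int_{\partial\Omega} \nu_q^{\psi}(x)\, f(x) = \int_\Omega {}^{\psi}_q\mathcal D[f](x)\, d\lambda_q^{\psi}(x) = 0$.

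Next I would expand the quaternionic equation $\int_{\partial \Omega} e^{2\langle q,x\rangle_\psi}\, \sigma_x^{\psi}\, f(x) = 0$ into its four real components. Writing $\sigma_x^{\psi} = -\mathrm{sgn}\,\psi\,(\,d\hat{x}_0 - \psi_1 d\hat{x}_1 + \psi_2 d\hat{x}_2 - \psi_3 d\hat{x}_3\,) = -\mathrm{sgn}\,\psi\,(\,d\hat{x}_0 - e_1 d\hat{x}_1 - e_2 d\hat{x}_2 - e_3 d\hat{x}_3\,)$ and $f = f_0 + f_1 e_1 + f_2 e_2 + f_3 e_3$ (in the standard basis; note the components relative to $\psi$ differ from the real-variable components only by the sign in the $e_2$ slot, which I must track carefully against the statement's indexing), the product $\sigma_x^{\psi} f$ is computed using the multiplication table $e_1 e_2 = e_3$, etc. Collecting the coefficients of $1, e_1, e_2, e_3$ and using $\langle q, x\rangle_\psi = q_0 x_0 + q_1 x_1 + q_2 x_2 + q_3 x_3$ (so that $e^{2\langle q,x\rangle_\psi} = e^{2\langle q,x\rangle}$ with the real dot product, matching the exponential in the statement), each of the four scalar equations in the theorem should drop out, up to the overall nonzero constant $-\mathrm{sgn}\,\psi$ which can be divided off.

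The main obstacle I anticipate is purely bookkeeping: matching the paper's convention that the real components $(f_0,f_1,f_2,f_3)$ and $(q_0,q_1,q_2,q_3)$ in \eqref{equa1} are the \emph{real-variable} components, whereas the quaternion $q = q_0 + q_1 e_1 - q_2 e_2 + q_3 e_3$ carries a sign flip in its $e_2$-part, and similarly the function $f$ must be assembled as $f_\psi = \sum f_k \psi_k = f_0 + f_1 e_1 - f_2 e_2 + f_3 e_3$. One must verify that this single coordinated sign change is exactly what converts the clean quaternionic identity ${}^{\psi}_q\mathcal D[f]=0 \Leftrightarrow$ \eqref{equa1} (already asserted in the text) and, correspondingly, turns $\int_{\partial\Omega}\nu_q^\psi f = 0$ into the four displayed real integral identities with precisely the signs and index permutations shown (e.g. the appearance of $d\hat{x}_3 f_2$ with a $+$ in the second equation, $d\hat{x}_1 f_3$ in the third). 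Once the sign conventions are pinned down, the rest is a routine expansion of a quaternion product and separation into real and imaginary parts; no analytic subtlety is involved beyond the already-granted Proposition~\ref{proplambda}.
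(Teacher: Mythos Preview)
Your approach is essentially the same as the paper's. The paper's proof also fixes $\psi=\{1,e_1,-e_2,e_3\}$, notes $\mathrm{sgn}\,\psi=-1$, invokes the equivalence between \eqref{equa1} and ${}^{\psi}_{q}\mathcal D[f]=0$, applies \eqref{STOkes_alpha_psi} with the constant $1$ on one side to obtain $\int_{\partial\Omega}\nu_q^\psi(x)f(x)=0$, and then reads off the four real components. One small correction to your bookkeeping: the paper assembles the quaternionic function in the \emph{standard} basis, $f=f_0+f_1e_1+f_2e_2+f_3e_3$, not as $f_\psi=\sum f_k\psi_k$; the sign flip on the $e_2$-slot lives only in $q$ and in $\sigma_x^\psi$, not in $f$.
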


\begin{proof} For $\psi=\{1,e_1,-e_2,e_3\}$ we see that sig$\psi=-1$ and $f =(f_0,f_1,f_2,f_3)$ satisfy \eqref{equa1} if and only if $f = f_0 + f_1e_1 + f_2e_2 + f_3e_3$ belongs to Ker ${}^{\psi}_{q}\mathcal D\cap C^1(\Omega, \mathbb H)$. 

By \eqref{STOkes_alpha_psi} and taking $g=1$ we obtain  
\begin{align}\label{CauchyTypeTheorem}
\int_{\partial \Omega} \nu^{\psi}_q(x)  f(x) = 0. 
\end{align}
The real components of the previous identity establishes the formulas
\end{proof}

According to \eqref{equa56} with the notation
\begin{equation*}
\begin{split}
K^q_{\sigma, 0}(y-x)~=~K^q_0(y-x) d\hat{y}_0 + K^q_1(y-x)d\hat{y}_1 + K^q_2(y-x)d\hat{y}_2 + K^q_3(y-x)d\hat{y}_3, \\ 
K^q_{\sigma, 1}(y-x)=-K^q_0(y-x) d\hat{y}_1 + K^q_1(y-x)d\hat{y}_0 - K^q_2(y-x)d\hat{y}_3+ K^q_3(y-x)d\hat{y}_2, \\
K^q_{\sigma, 2}(y-x)=-K^q_0(y-x) d\hat{y}_2 + K^q_1(y-x)d\hat{y}_3 + K^q_2(y-x)d\hat{y}_0- K^q_3(y-x)d\hat{y}_1,  \\
K^q_{\sigma, 3}(y-x)=-K^q_0(y-x) d\hat{y}_3 - K^q_1(y-x)d\hat{y}_2 + K^q_2(y-x)d\hat{y}_1+ K^q_3(y-x)d\hat{y}_0. 
\end{split}
\end{equation*}
we have
\begin{theorem}\label{CauchyTypeFormula} (Cauchy type formula) 
Let $\Omega\subset \mathbb R^4$ be a domain with a 3-dimensional compact smooth boundary denoted by $\partial \Omega$. 
If $f =(f_0,f_1,f_2,f_3)$ satisfies \eqref{equa1} then
\begin{align*}    
& \int_{\partial \Omega}  K^q_{\sigma,0} (y-x)f_0(y)-K^q_{\sigma,1}(y-x)f_1(y)-K^q_{\sigma,2}(y-x)f_2(y)-K^q_{\sigma,3}(y-x)f_3(y) \\
=  & \left\{ \begin{array}{ll}  f_0(x)   , &  x\in \Omega,  \\ 0 , &  x\in \mathbb H\setminus\overline{\Omega} ,                    \end{array} \right. \\
  &  \int_{\partial \Omega }  
  K^q_{\sigma, 0} (y-x)   f_1(y) +  K^q_{\sigma, 1} (y-x)  f_0(y)  + K^q_{\sigma, 2} (y-x)  f_3(y) - 
 K^q_{\sigma, 3} (y-x)   f_2 (y ) 
\\
   		=  &      \left\{ \begin{array}{ll}  f_1(x)   , &  x\in \Omega,  \\ 0 , &  x\in \mathbb H\setminus\overline{\Omega} ,                    \end{array} \right. 
\\
  &  \int_{\partial \Omega }  
  K^q_{\sigma, 0} (y-x)   f_2(y) -  K^q_{\sigma, 1} (y-x)  f_3(y)  + K^q_{\sigma, 2} (y-x)  f_0(y) - 
 K^q_{\sigma, 3} (y-x)   f_1 (y ) \\
   		=  &      \left\{ \begin{array}{ll}  f_2(x)   , &  x\in \Omega,  \\ 0 , &  x\in \mathbb H\setminus\overline{\Omega} ,                    \end{array} \right. 
\\
  &  \int_{\partial \Omega }  
  K^q_{\sigma, 0} (y-x)   f_3(y) +  K^q_{\sigma, 1} (y-x)  f_2(y)  - K^q_{\sigma, 2} (y-x)  f_1(y) + 
 K^q_{\sigma, 3} (y-x)   f_0(y ) \\
   		=  &      \left\{ \begin{array}{ll}  f_3(x)   , &  x\in \Omega,  \\ 0 , &  x\in \mathbb H\setminus\overline{\Omega}.     
			\end{array} \right. 
		\end{align*} 
		
\end{theorem}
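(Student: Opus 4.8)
The plan is to derive the Cauchy type formula for the inhomogeneous Cimmino system \eqref{equa1} as a direct specialization of \eqref{ecuaNEWCauchy} in Proposition~\ref{proplambda}, exactly as Theorem~\ref{CauchyTypeTeorem} was obtained from \eqref{STOkes_alpha_psi}. First I would fix the structural set $\psi=\{1,e_1,-e_2,e_3\}$, for which $\mathrm{sgn}\,\psi=-1$, and recall from the discussion opening Section~3 that $f=(f_0,f_1,f_2,f_3)$ satisfies \eqref{equa1} if and only if $f=f_0+f_1e_1+f_2e_2+f_3e_3\in \mathrm{Ker}\,{}^{\psi}_{q}\mathcal D\cap C^1(\Omega,\mathbb H)$, where $q=q_0+q_1e_1-q_2e_2+q_3e_3$. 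Then I would apply \eqref{ecuaNEWCauchy} with this $f$ and with $g\equiv 0$; since ${}^{\psi}_{q}\mathcal D[f]=0$, the volume integral drops out and one is left with
\begin{align*}
\int_{\partial\Omega} K^{\psi}_{q}(y-x)\,\sigma^{\psi}_{y}\,f(y) = \left\{\begin{array}{ll} f(x), & x\in\Omega,\\ 0, & x\in\mathbb H\setminus\overline{\Omega}.\end{array}\right.
\end{align*}

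The bulk of the work is then purely algebraic: expand both $K^{\psi}_{q}(y-x)=K^{q}_0+K^{q}_1e_1+K^{q}_2e_2+K^{q}_3e_3$ (using \eqref{equa56}) and $\sigma^{\psi}_{y}=-\mathrm{sgn}\,\psi\sum_{k=0}^{3}(-1)^k\psi_k d\hat y_k = \sum_k(-1)^k\psi_k d\hat y_k$ with $\psi_0=1,\psi_1=e_1,\psi_2=-e_2,\psi_3=e_3$, and $f=\sum_m f_m e_m$ (with $e_0=1$), and multiply out in $\mathbb H$. Collecting the coefficient of $d\hat y_k$ in the product $K^{\psi}_{q}(y-x)\sigma^{\psi}_y$ produces exactly the quaternion-valued differential forms $K^{q}_{\sigma,0},\dots,K^{q}_{\sigma,3}$ displayed before the theorem; one should double-check the sign pattern coming from the $\psi_2=-e_2$ entry, which is the one place the chosen structural set differs from $(1,e_1,e_2,e_3)$ and hence the most error-prone bookkeeping. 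Having matched the $d\hat y_k$-coefficients, the integrand becomes $\big(\sum_k K^{q}_{\sigma,k}\,\text{(sign)}\big)\cdot\big(\sum_m f_m e_m\big)$; carrying out this last quaternionic product and reading off the $1,e_1,e_2,e_3$ components yields the four scalar identities in the statement, the first being the real part, the next three the $e_1$-, $e_2$-, $e_3$-parts respectively.

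Finally I would remark that the right-hand side splits componentwise because the point value $f(x)$ (resp.\ $0$) decomposes as $f_0(x)+f_1(x)e_1+f_2(x)e_2+f_3(x)e_3$ (resp.\ $0$), so equating real and imaginary parts on both sides is legitimate and gives precisely the four cases with $f_0(x),f_1(x),f_2(x),f_3(x)$ on the respective right-hand sides. The main obstacle is not conceptual but combinatorial: faithfully tracking the twelve sign choices produced by the interaction of the three sources of signs --- the $(-1)^k$ in $\sigma^{\psi}_y$, the $-e_2$ in $\psi$, and the quaternion multiplication table --- so that the coefficients of $f_1,f_2,f_3$ in each of the four equations come out with the signs shown (note in particular the mixed $+,+,-,+$ and $+,-,+,-$ patterns). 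I would organize this as a single $4\times4$ sign matrix and verify it against the definition of $K^{q}_{\sigma,k}$ given just above the theorem, after which the four formulas follow immediately.
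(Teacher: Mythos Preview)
Your proposal is correct and follows essentially the same route as the paper's own proof: fix $\psi=\{1,e_1,-e_2,e_3\}$, apply \eqref{ecuaNEWCauchy} with $g\equiv 0$ to obtain the quaternionic identity $\int_{\partial\Omega}K^{\psi}_{q}(y-x)\sigma^{\psi}_y f(y)=f(x)$ (or $0$), expand $K_q(y-x)\sigma^{\psi}_y$ into the $K^{q}_{\sigma,k}$ forms, and read off the four real components. The paper's argument is no more and no less than this; your added remarks about organizing the sign bookkeeping are helpful but do not depart from the paper's method.
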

\begin{proof} Set $\psi=\{1,e_1,-e_2,e_3\}$. Then 
\begin{align*}
K_{q}(y-x)\sigma_{y}^{\psi} =  & 
\left[ K^q_0(y-x) + K^q_1(y-x) e_1+ K^q_2(y-x) e_2 + K^q_3(y-x) e_3\right] \\
 &  \ \  \  \  \  (    d\hat{y}_0 -   d\hat{y}_1 e_1 -   d\hat{y}_2 e_2  -   d\hat{y}_3  e_3 ) \\
=  & K^q_{\sigma, 0} (y-x)  
 + K^q_{\sigma, 1} (y-x) e_1 +K^q_{\sigma, 2} (y-x) e_2 + K^q_{\sigma, 3} (y-x) e_3,
\end{align*}
where $\sigma_{y}^{\psi}:= d\hat{y}_0 - e_1 d\hat{y}_1 -  e_2 d\hat{y}_2 -  e_3 d\hat{y}_3.$

Due to $f =(f_0,f_1,f_2,f_3)$ satisfy \eqref{equa1} if and only if $f = f_0 + f_1e_1 + f_2e_2 + f_3e_3 \in $ Ker ${}^{\psi}_{q}\mathcal D\cap C^1(\Omega, \mathbb H)$ we can use \eqref {ecuaNEWCauchy} and $g\equiv 0$ to obtain  
\begin{align*}    \int_{\partial \Omega }   K^{\psi}_{ q  }( y -x)\sigma_{ y }^{\psi} f( y ) =  &  \left\{ \begin{array}{ll}  f(x), &  x\in \Omega,  \\ 0 , &  x\in \mathbb H\setminus\overline{\Omega} ,                    
\end{array} \right. 
\end{align*} 
and finally, we find its real components and the proposition follows.
\end{proof}

\begin{proposition}\label{conformalCimmino}
Given $q=(q_0,q_1,q_2,q_3),  r= (r_0,r_1,r_2,r_3)\in \mathbb R^4$ and  let $\Omega,\Xi \subset \mathbb H$ be two conformally equivalent domains such that  $T(\Xi)=\Omega$.Then $(f_0,f_1 f_2,f_3)$ defined on $\Omega $ satisfies the inhomogeneous Cimmino system  
	\begin{align}\label{CimminononconstantPerturba}
     \frac{\partial f_0 }{\partial x_0}  + \frac{\partial f_2}{\partial x_2}  - \frac{\partial f_1}{\partial x_1}  - \frac{\partial f_3}{\partial x_3}  = & \ \delta_{T,r}^0 f_0 +\delta_{T,r}^2 f_2 -\delta_{T,r}^1 f_1 -\delta_{T,r}^3 f_3 \nonumber \\
     \frac{\partial f_0}{\partial x_1}  +\frac{\partial f_1}{\partial x_0}  -\frac{\partial f_2}{\partial x_3}  +\frac{\partial f_3}{\partial x_2}  = &  \ \delta_{T,r}^1 f_0+\delta_{T,r}^0 f_1-\delta_{T,r}^3f_2 + \delta_{T,r}^2 f_3   \nonumber \\
      \frac{\partial f_0}{\partial x_2}  +\frac{\partial f_3}{\partial x_1}  -\frac{\partial f_1}{\partial x_3}  -\frac{\partial f_2}{\partial x_0}  = &  \  \delta_{T,r}^2 f_0 +\delta_{T,r}^1f_3 -\delta_{T,r}^3 f_1 -\delta_{T,r}^0 f_2 \nonumber  \\
      \frac{\partial f_0}{\partial x_3}  + \frac{\partial f_1}{\partial x_2}  +\frac{\partial f_2}{\partial x_1}  +\frac{\partial f_3 }{\partial x_0}  =& \ \delta_{T,r}^3f_0+\delta_{T,r}^2f_1+\delta_{T,r}^1f_2 +\delta_{T,r}^0f_3, 
\end{align}
where  $\delta_{T,r} = \delta_{T,r}^0 + \delta_{T,r}^1 e_1 - \delta_{T,r}^2 e_2 + \delta_{T,r}^3 e_3$ and $\delta_{T,r}^n$ is real-valued for $n=0,1,2,3$,  if and only if the four-tuple of functions 
\begin{align*}
  g_0(x) = &e^{  \langle   r   -  q  , x \rangle } \left( A_T^0 (x)f_0 (x)-A_T^1(x) f_1(x) -A_T^2 (x)f_2(x) -A_T^3(x) f_3(x)\right),\\
g_1 (x)= & e^{  \langle   r   -  q  , x \rangle } \left(  A_T^1 (x) f_0(x)+A_T^0(x) f_1(x)-A_T^3 (x) f_2(x) + A_T^2(x) f_3(x) \right),\\
   g_2 (x)   =  & e^{  \langle   r   -  q  , x \rangle } \left( A_T^2 (x) f_0(x) - A_T^1 (x)f_3(x) + A_T^3 (x) f_1(x) +A_T^0(x)f_2(x)\right),\\
   g_3  (x) = &  e^{  \langle   r   -  q  , x \rangle}\left(  A_T^ 3  (x) f_0(x)-A_T^2 (x) f_1(x)+A_T^1 (x) f_2(x) +A_T^0 (x) f_3(x)\right) ,  \quad x\in \Omega, 
\end{align*}
where $A_T = A_T^0 + A_T^1 e_1 + A_T^2 e_2 + A_T^3 e_3$ and $A_T^n$ is a real-valued for $n=0,1,2,3$, satisfies \eqref{equa1}. 
\end{proposition}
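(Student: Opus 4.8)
The plan is to recognise Proposition~\ref{conformalCimmino} as the scalar (componentwise) reformulation of the operator identity \eqref{eq7} from Proposition~\ref{conformal}, in the same spirit in which Theorems~\ref{CauchyTypeTeorem} and \ref{CauchyTypeFormula} are the scalar reformulations of Proposition~\ref{proplambda}. Throughout I fix $\psi=\{1,e_1,-e_2,e_3\}$. \textbf{Step 1 (the dictionary).} I would first record, exactly as is tacitly used in the proof of Theorem~\ref{CauchyTypeTeorem}, the elementary fact: if $p=p^{0}+p^{1}e_{1}-p^{2}e_{2}+p^{3}e_{3}$ is any $\mathbb H$-valued coefficient — \emph{constant or position-dependent} — and $h=h_{0}+h_{1}e_{1}+h_{2}e_{2}+h_{3}e_{3}$, then expanding ${}^{\psi}_{p}\mathcal D[h]={}^{\psi}\mathcal D[h]+ph$ in the basis $\{1,e_{1},e_{2},e_{3}\}$ by means of $e_{1}e_{2}=e_{3}$, $e_{2}e_{3}=e_{1}$, $e_{3}e_{1}=e_{2}$, $e_{k}^{2}=-1$, shows that the four real components of ${}^{\psi}_{p}\mathcal D[h]=0$ are precisely a Cimmino-type system whose right-hand side is assembled from $p^{0},\dots,p^{3}$ in the same pattern as in \eqref{equa1}. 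Applying this with $p=q=q_{0}+q_{1}e_{1}-q_{2}e_{2}+q_{3}e_{3}$ gives: $(g_{0},g_{1},g_{2},g_{3})$ solves \eqref{equa1} if and only if $g:=g_{0}+g_{1}e_{1}+g_{2}e_{2}+g_{3}e_{3}\in\operatorname{Ker}\,{}^{\psi}_{q}\mathcal D$. Applying it with the coefficient $p=\delta_{T,r}=\delta_{T,r}^{0}+\delta_{T,r}^{1}e_{1}-\delta_{T,r}^{2}e_{2}+\delta_{T,r}^{3}e_{3}$ gives: $(f_{0},f_{1},f_{2},f_{3})$ solves \eqref{CimminononconstantPerturba} if and only if $f:=f_{0}+f_{1}e_{1}+f_{2}e_{2}+f_{3}e_{3}\in\operatorname{Ker}\,{}^{\psi}_{\delta_{T,r}}\mathcal D$.

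\textbf{Step 2 (identifying $g$).} Next I would carry out the single quaternionic multiplication behind the formulas for $g_{0},\dots,g_{3}$. With $A_{T}=A_{T}^{0}+A_{T}^{1}e_{1}+A_{T}^{2}e_{2}+A_{T}^{3}e_{3}$ and $f\circ T=(f_{0}\circ T)+(f_{1}\circ T)e_{1}+(f_{2}\circ T)e_{2}+(f_{3}\circ T)e_{3}$, a direct expansion of the product $A_{T}\,(f\circ T)$ in the basis $\{1,e_{1},e_{2},e_{3}\}$ produces exactly the four bracketed combinations appearing in the definitions of $g_{0},g_{1},g_{2},g_{3}$. Hence the four-tuple $(g_{0},g_{1},g_{2},g_{3})$ of Proposition~\ref{conformalCimmino} is precisely the scalar decomposition of the single $\mathbb H$-valued function $g=e^{\langle r-q,\cdot\rangle_{\psi}}\,A_{T}\,(f\circ T)$ occurring in \eqref{eq7}.

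\textbf{Step 3 (invoking \eqref{eq7} and concluding).} By Proposition~\ref{conformal},
\[
{}^{\psi}_{q}\mathcal D_{x}[g]={}^{\psi}_{q}\mathcal D_{x}\big[e^{\langle r-q,x\rangle_{\psi}}A_{T}\,(f\circ T)\big]=e^{\langle r-q,x\rangle_{\psi}}\,(B_{T}\circ T)(x)\,\big({}^{\psi}_{\delta_{T,r}}\mathcal D_{y}[f]\big)(T(x)).
\]
Here $e^{\langle r-q,x\rangle_{\psi}}$ is a strictly positive real scalar, and $B_{T}(y)$ is an invertible quaternion at every point of the domain on which $T$ is a conformal equivalence: if $c=0$ then $B_{T}=\bar a\neq0$, while if $c\neq0$ then $B_{T}(y)=-\bar c\,\overline{(y-ac^{-1})}\,|y-ac^{-1}|^{4}$ with $y-ac^{-1}\neq0$, since $ac^{-1}$ is the single point at which the inverse M\"obius map degenerates and is therefore excluded from the relevant domain. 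Consequently ${}^{\psi}_{q}\mathcal D[g]\equiv0$ if and only if ${}^{\psi}_{\delta_{T,r}}\mathcal D[f]\equiv0$. Combining this with the two equivalences of Step~1: $(f_{0},\dots,f_{3})$ solves \eqref{CimminononconstantPerturba} $\iff$ ${}^{\psi}_{\delta_{T,r}}\mathcal D[f]=0$ $\iff$ ${}^{\psi}_{q}\mathcal D[g]=0$ $\iff$ $(g_{0},\dots,g_{3})$ solves \eqref{equa1}, which is the assertion.

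The main obstacle is not conceptual — identity \eqref{eq7} already contains all of the analysis. What requires care is purely organisational: keeping the sign introduced by $\psi_{2}=-e_{2}$ consistent through every quaternion product (in the dictionary of Step~1 for \emph{both} Cimmino systems, and in the expansion of $A_{T}\,(f\circ T)$ in Step~2), and recording once that $B_{T}$ never vanishes so that the final equivalence is genuinely two-sided rather than one-directional.
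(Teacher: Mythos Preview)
Your proposal is correct and follows exactly the route the paper takes: specialise Proposition~\ref{conformal} to $\psi=\{1,e_1,-e_2,e_3\}$, recognise that \eqref{CimminononconstantPerturba} and \eqref{equa1} are the real components of ${}^{\psi}_{\delta_{T,r}}\mathcal D[f]=0$ and ${}^{\psi}_{q}\mathcal D[g]=0$ respectively, and read off the equivalence from \eqref{eq7}. Your Step~3 observation that $B_T$ is everywhere invertible (so the implication is genuinely two-sided) is a detail the paper's one-line proof leaves implicit, and your Step~2 correctly unwinds $g=e^{\langle r-q,\cdot\rangle_\psi}A_T\,(f\circ T)$ into its components; note only that the paper's displayed formulas for $g_n$ write $f_k(x)$ where $f_k\circ T(x)$ is what \eqref{eq7} actually produces.
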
 

\begin{proof}
Consider  $\psi= \{1,e_1,-e_2,e_3\}$. Due to $f = f_0 + f_1e_1 + f_2e_2 + f_3e_3$ belongs to Ker ${}^{\psi}_{\delta_{T,r}}\mathcal D\cap C^1(\Omega, \mathbb H)$ we can use \eqref{eq7} and find its real components.  
\end{proof}

\begin{definition} Let $\Omega\subset\mathbb R^4$ be a domain and fix $(q_0,q_1,q_2,q_3)\in \mathbb R^4$.
\begin{enumerate} 
\item The Bergman space associated to $\Omega$ induced by \eqref{equa1}, denoted by $CB_q(\Omega)$, consists of the four-tuples  $(f_0 , f_1, f_2, f_3 )$ of elements of $C^1(\Omega,\mathbb R)$ that satisfy \eqref{equa1} and $\displaystyle \int_{\Omega} |f_k|^2  d\mu <\infty, \quad  k=0,1,2,3$. 

Define the real-bilinear form 
$$\langle \cdot, \cdot \rangle_{CB_q(\Omega)}: CB_q(\Omega) \times CB_q(\Omega) \to \mathbb R^4$$
as follows: 
\begin{align*} 
& \langle  f, 
 g \rangle_{CB_q(\Omega)} \\ 
= & \left(  \int_{\Omega} (   f_0g_0 +  f_1g_1 +  f_2 g_2  +  f_3g_3  ) d\mu , \ 
\int_{\Omega} (  f_0g_1  -   f_1g_0 + f_2  g_3-   f_3g_2  ) d\mu  , \right. \\
  &  \left. \ 
\int_{\Omega} (   f_0 g_2  +  f_3  g_1- f_1 g_3 -   f_2 g_0  ) d\mu   , \ 
 \int_{\Omega} (   f_0 g_3 -  f_1 g_2 +   f_2g_1 -    f_3g_0   ) d\mu \right) ,\end{align*}
and  set
$$ 
\| f\|_{CB_q(\Omega)} =\left[ \int_{\Omega} (f_0^2 +f_1^2+f_2^2 + f_3^2 )  d\mu\right]^{\frac{1}{2}} ,$$ for all 
  $f =    (f_0,f_1, f_2 , f_3) , \ g 
 = (  g_0 , g_1 ,  g_2  ,  g_3 ) \in    CB_q(\Omega)$. 

Note that $ \|f\|_{CB_q(\Omega)}^2 = \Pi_0(\langle  f, f \rangle_{  CB(\Omega)})$ where $\Pi_0:\mathbb R^4 \to \mathbb R$ denotes the projection $\Pi_0(x_0,x_1,x_2,x_3)= x_0$ for all  $(x_0,x_1,x_2,x_3) \in \mathbb R^4$. 
 
\item The weighted Bergman space with wight function $e^{2  \langle q, x\rangle} $ for all $x\in\Omega$ and induced by the inhomogeneous Cimmino system \eqref{equa1} is denoted by $CB_{\lambda_q}(\Omega)$  and it is formed by $(f_0,f_1,f_2,f_3)$, solutions \eqref{equa1} such that 
$$\int_{\Omega} |f_k|^2 d\lambda_q    =  \int_{\Omega} |f_k(x)|^2  e^{2  \langle q, x\rangle} d\mu_x   <\infty, \quad  k=0,1,2,3.$$
Given  
$f = (f_0,f_1, f_2 , f_3) , \ g = (g_0 , g_1 ,  g_2  ,  g_3 ) \in    CB_{\lambda_q}(\Omega)$   define the real-bilinear form  
$\langle  \cdot , \cdot  \rangle_{{CB}_{\lambda_q}(\Omega)} : {CB}_{\lambda_q}(\Omega)\times {CB}_{\lambda_q}(\Omega) \to \mathbb R^4$ given by 
\begin{equation*}
\begin{split} 
\langle f, g \rangle_{CB_{\lambda_q}(\Omega)} =  \\
\left(\int_{\Omega} (f_0g_0 + f_1g_1 + f_2 g_2  +  f_3g_3)d\lambda_q , \int_{\Omega} (f_0g_1 - f_1g_0 + f_2  g_3- f_3g_2)d\lambda_q, \right. \\
\left. \int_{\Omega} (f_0g_2 + f_3g_1 - f_1 g_3 - f_2 g_0 )d\lambda_q, \int_{\Omega} (f_0 g_3 - f_1 g_2 + f_2g_1 - f_3g_0)d\lambda_q \right)
\end{split}
\end{equation*}
and consider  
\begin{equation*} 
\begin{split}
\|f\|_{CB_{\lambda_q}(\Omega)} = \|(f_0,f_1, f_2 , f_3)\|_{CB_{\lambda_q}(\Omega)}:=\left[\int_{\Omega}(f_0^2 +f_1^2+f_2^2 + f_3^2)d\lambda_q \right]^{\frac{1}{2}}.
\end{split}
\end{equation*}
Note that 
\begin{equation*} 
\begin{split}
\|f\|^2_{CB_{\lambda_q}(\Omega)} = \Pi_0(\langle f, f\rangle_{CB_{\lambda_q}(\Omega)}),
\end{split}
\end{equation*}
for all $f\in CB_{\lambda_q}(\Omega)$.
\item Let $q, r\in\mathbb R^4$ and let $\Omega$ and $\Xi$ be two conformally equivalent domains such that $T(\Xi)=\Omega$. According to Proposition \ref{conformalCimmino} set $\gamma_{_{T,r,q}}(y)= e^{2\langle r - q, T^{-1}(y) \rangle}$ for all $y\in \Omega$. Let $CB^{\delta_{T,r}}_{\gamma_{_{T,r,q}}}(\Omega)$ denote the set of four-tuples of functions in $C^1(\Omega, \mathbb R)$ and  let $(f_0,f_1,f_2,f_3)$ defined on $\Omega$ satisfying \eqref{CimminononconstantPerturba} and such that
$$\displaystyle \int_{\Omega} |f_k|^2 \gamma_{_{T,r,q}} d\mu <+\infty,$$ 
for $k=0,1,2,3$.   
	
For $f=(f_0,f_1,f_2,f_3), g=(g_0,g_1,g_2,g_3) \in CB^{\delta_T}_{\gamma_{_{T,r,q}}} (\Omega)$  define the real-bilinear form 
$$\langle \cdot,\cdot \rangle_{CB^{\delta_{T,r}}_{\gamma_{_{T,r,q}}} (\Omega)}: CB^{\delta_{T,r}}_{\gamma_{_{T,r,q}}} (\Omega)  \times CB^{\delta_{T,r}}_{\gamma_{_{T,r,q}}} (\Omega)  \to \mathbb R^4$$
as follows: 
\begin{align*}  & \langle f,g \rangle_{CB^{\delta_{T,r}}_{\gamma_{_{T,r,q}}} (\Omega) } \\ 
= & \left(  \int_{\Omega} (   f_0g_0 +   f_1g_1+   f_2g_2  +  f_3 g_3 ) \gamma_{_{T,r,q}}d\mu , \ 
\int_{\Omega} (   f_0g_1 -  f_1 g_0 +  f_2 g_3-  f_3  g_2 ) \gamma_{_{T,r,q}}d\mu  , \right. \\
  &  \left. \ 
\int_{\Omega} (     f_0g_2 +   f_3 g_1-  f_1 g_3-   f_2g_0   ) \gamma_{_{T,r,q}} d\mu   , \ 
 \int_{\Omega} (    f_0g_3 -  f_1g_2  +   f_2 g_1-   f_3  g_0  ) \gamma_{_{T,r,q}} d\mu \right) ,
\end{align*}
and set $\|(f_0,f_1,f_2,f_3)\|_{CB^{\delta_{T,r}}_{\gamma_{_{T,r,q}}}(\Omega)}^2 = \Pi_0(\langle  f, f \rangle_{CB^{\delta_{T,r}}_{\gamma_{_{T,r,q}}} (\Omega)})$.

At the same time, we will denote by $CB^{q}_{\rho_{_T}} (\Xi) $, the set of four-tuples of elements of  $C^1(\Omega,\mathbb R)$: $(f_0,f_1,f_2,f_3)$ such that satisfy \eqref{equa1} and 
$$\displaystyle \int_{\Xi} |f_k|^2 \rho_{_T} d\mu<+\infty,$$ 
for $k=0,1,2,3$. Recall that $\rho_{_T}$ is given by \eqref{peso1}.   
	
Given $f=(f_0,f_1,f_2,f_3), g=(g_0,g_1,g_2,g_3) \in  CB^{q}_{\rho_{_T}}(\Xi)$ define the real- form
$$\langle \cdot,\cdot \rangle_{CB^{q}_{\rho_{_T}} (\Xi)}: CB^{q}_{\rho_{_T}} (\Xi) \times CB^{q}_{\rho_{_T}} (\Xi) \to \mathbb R^4$$ 
as follows:
\begin{equation*}
\begin{split}
\langle f,g \rangle_{CB^{q}_{\rho_{_T}} (\Xi)}=  \\
\left(\int_{\Xi} (f_0 g_0 + f_1 g_1 + f_2 g_2 + f_3g_3)\rho_{_T}d\mu, \ \int_{\Xi}(f_0g_1-f_1g_0 + f_2g_3 -f_3g_2)\rho_{_T}d\mu, \right. \\
\left. \int_{\Xi} (f_0 g_2 + f_3g_1 - f_1g_3 - f_2 g_0) \rho_{_T} d\mu, \ \int_{\Xi} (f_0 g_3-f_1g_2 + f_2g_1-f_3g_0) \rho_{_T} d\mu \right),
\end{split}
\end{equation*}
and 
$$\|(f_0,f_1,f_2,f_3)\|_{CB^{q}_{\rho_{_T}} (\Xi)}^2 = \Pi_0(\langle f, f \rangle_{CB^{q}_{\rho_{_T}} (\Xi)}).$$  
\end{enumerate}
\end{definition}

\begin{proposition}\label{HilertSpaceCimmino} 
Let $\Omega\subset \mathbb H$ be a domain and set $ (q_0, q_1,q_2,q_3)\in \mathbb R^4$. Then $CB_q(\Omega)$ is a real-Banach space and there exist a four-tuple of real-valued reproducing functions, denoted by $\left(B_0(x,\cdot), B_1(x,\cdot), B_2(x,\cdot), B_3(x,\cdot) \right) \in CB_q(\Omega)$, such that 
\begin{equation}\label{Reprod}
\begin{split}  
\langle \left(B_0 (x,\cdot), -B_1(x,\cdot), -B_2(x,\cdot), -B_3(x,\cdot)\right), \ (f_0, f_1, f_2, f_3)\rangle_{CB_q(\Omega)} = \\
 (f_0(x),f_1(x),f_2(x),f_3 (x))
\end{split}
\end{equation}
or equivalently 
\begin{align*}    \int_{ \Omega }  
    \left( B_0 (x,y )   f_0(y) -  B_1  (x,y )  f_1(y)  - B_2  (x,y )  f_2(y) - 
 B_3  (x,y )    f_3 (y )  \right) d\mu_y =& \ f_0(x) ,\nonumber \\
  \int_{  \Omega }   \left( 
  B_0 (x,y )   f_1(y) +  B_1  (x,y )  f_0(y)  + B_2  (x,y )  f_3(y) - 
 B_3  (x,y )     f_2 (y )  \right) d\mu_y =&  \ f_1(x)  ,\nonumber\\
  \int_{  \Omega }   \left( 
  B_0 (x,y )   f_2(y) -  B_1  (x,y )  f_3(y)  + B_2  (x,y )  f_0(y) + 
 B_3  (x,y )     f_1 (y )  \right) d\mu_y=&  \ f_2(x)   , \nonumber\\
 \int_{  \Omega }   \left( 
  B_0 (x,y )   f_3(y) +  B_1  (x,y )  f_2(y)  - B_2  (x,y )  f_1(y) + 
 B_3  (x,y )     f_0(y )  \right) d\mu_y=& \ f_3(x),
\end{align*} 
for all $f =(f_0,f_1,f_2,f_3) \in CB_q(\Omega)$.

Additionally, if $g_k:\Omega\to \mathbb R$ is a measurable function and 
$$\int_{\Omega} |g_k|^2  d\mu <\infty, \quad  k=0,1,2,3$$   
then the four-tuple of real-valued functions:
\begin{equation} \label{Proyec}
\begin{split}
(h_0(x), h_1(x), h_2(x), h_3(x))= \\ 
\langle \left( B_0 (x,\cdot), -B_1  (x,\cdot), -B_2  (x,\cdot),-B_3(x,\cdot)\right), (g_0,g_1,g_2,g_3)\rangle_{CB_q(\Omega)},
\end{split}
\end{equation}
for all  $x\in \Omega $, belong to $CB_{q}(\Omega)$.
\end{proposition}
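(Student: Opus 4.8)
The plan is to read the whole statement off Proposition~\ref{HilertSpace} through the standard identification of Cimmino four-tuples with quaternion-valued functions. Throughout put $\psi=\{1,e_1,-e_2,e_3\}$ and $q=q_0+q_1e_1-q_2e_2+q_3e_3\in\mathbb H$. As already exploited in the proofs of Theorems~\ref{CauchyTypeTeorem} and~\ref{CauchyTypeFormula}, the assignment $\Phi\colon(f_0,f_1,f_2,f_3)\mapsto f_0+f_1e_1+f_2e_2+f_3e_3$ sends $(f_0,f_1,f_2,f_3)\in C^1(\Omega,\mathbb R)^4$ solving \eqref{equa1} bijectively onto $\mathrm{Ker}\,{}^{\psi}_{q}\mathcal D\cap C^1(\Omega,\mathbb H)$; moreover $|f|^2=f_0^2+f_1^2+f_2^2+f_3^2$, so $\Phi$ restricts to an $\mathbb R$-linear bijection of $CB_q(\Omega)$ onto ${}^{\psi}_{q}\mathcal A(\Omega)$ with $\|f\|^2_{CB_q(\Omega)}=\int_\Omega|\Phi f|^2\,d\mu$ for all $f\in CB_q(\Omega)$. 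First I would note that $CB_q(\Omega)$ is a real-vector space because \eqref{equa1} is a linear system and that $\|\cdot\|_{CB_q(\Omega)}$ is a norm on it; then completeness passes from the Hilbert space ${}^{\psi}_{q}\mathcal A(\Omega)$ of Proposition~\ref{HilertSpace}(1) to $CB_q(\Omega)$ through the isometry $\Phi$, so $CB_q(\Omega)$ is a real-Banach space.

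Next I would produce the reproducing functions from the Bergman kernel ${}^{\psi}_{q}\mathcal B_\Omega$ of ${}^{\psi}_{q}\mathcal A(\Omega)$ furnished by Proposition~\ref{HilertSpace}(1). Write its real components as ${}^{\psi}_{q}\mathcal B_\Omega(x,\zeta)=B_0(x,\zeta)+B_1(x,\zeta)e_1+B_2(x,\zeta)e_2+B_3(x,\zeta)e_3$ with each $B_k$ real-valued. Expanding the quaternionic product ${}^{\psi}_{q}\mathcal B_\Omega(x,\zeta)\,f(\zeta)$ in \eqref{BergKern} by Hamilton's multiplication rules and separating the coefficients of $1,e_1,e_2,e_3$ converts the single identity $f(x)=\int_\Omega{}^{\psi}_{q}\mathcal B_\Omega(x,\zeta)f(\zeta)\,d\mu_\zeta$ into exactly the four scalar reproducing identities in the statement, which is \eqref{Reprod} rewritten with the bilinear form $\langle\cdot,\cdot\rangle_{CB_q(\Omega)}$. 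The only assertion needing an extra word is $(B_0(x,\cdot),-B_1(x,\cdot),-B_2(x,\cdot),-B_3(x,\cdot))\in CB_q(\Omega)$: under $\Phi$ this four-tuple corresponds to $\overline{{}^{\psi}_{q}\mathcal B_\Omega(x,\cdot)}$, which is the reproducing element of the right-Hilbert space ${}^{\psi}_{q}\mathcal A(\Omega)$ at $x$ and hence belongs to ${}^{\psi}_{q}\mathcal A(\Omega)$, so its real components form an element of $CB_q(\Omega)$.

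For the last assertion, given measurable $g_0,\dots,g_3$ with $\int_\Omega|g_k|^2\,d\mu<\infty$, set $g=\Phi(g_0,g_1,g_2,g_3)\in\mathcal L_2(\Omega,\mathbb H)$. The Bergman projection \eqref{BergProj} applied to $g$ produces ${}^{\psi}_{q}\mathfrak B_\Omega[g]\in{}^{\psi}_{q}\mathcal A(\Omega)$, and carrying out the same real-part expansion of ${}^{\psi}_{q}\mathcal B_\Omega(x,\zeta)\,g(\zeta)$ identifies the real components of ${}^{\psi}_{q}\mathfrak B_\Omega[g]$ with the four-tuple $(h_0,h_1,h_2,h_3)$ of \eqref{Proyec}. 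Since $\Phi^{-1}$ maps ${}^{\psi}_{q}\mathcal A(\Omega)$ onto $CB_q(\Omega)$, it follows that $(h_0,h_1,h_2,h_3)\in CB_q(\Omega)$.

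The conceptual content is entirely contained in Proposition~\ref{HilertSpace}; the only genuinely delicate point is the bookkeeping in the sixteen-term quaternionic products for the non-standard structural set $\psi=\{1,e_1,-e_2,e_3\}$, and in particular keeping track of the conjugation on the Bergman kernel so that the reproducing four-tuples land in $CB_q(\Omega)$ rather than in its right-hyperholomorphic companion. Everything else is a direct transcription across $\Phi$.
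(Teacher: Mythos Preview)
Your proposal is correct and follows precisely the paper's own approach: set $\psi=\{1,e_1,-e_2,e_3\}$, invoke item~1 of Proposition~\ref{HilertSpace}, write ${}^{\psi}_{q}\mathcal B_\Omega(x,\cdot)=B_0+B_1e_1+B_2e_2+B_3e_3$ in real components, and read off the scalar identities from \eqref{BergKern} and \eqref{BergProj}. You supply considerably more detail than the paper (the explicit isometry $\Phi$, the transfer of completeness, and the membership of the conjugated kernel in $CB_q(\Omega)$), but the underlying argument is identical.
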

\begin{proof} Let $\psi=\{1,e_1,-e_2,e_3\}$, we use the Fact 1. of Proposition \ref{HilertSpace} and denoting  		 
 $${}^{\psi}_{q}\mathcal B_\Omega(x,\cdot) = B_0(x,\cdot) + B_1(x,\cdot) e_1+ B_2(x,\cdot) e_2 + B_3(x,\cdot) e_3,$$
where $ B_n(x,\cdot) $ are the real-valued  functions for $n=0,1,2,3$ we can compute the real components of \eqref{BergKern} and \eqref{BergProj}, which complete the proof.
\end{proof}

\begin{proposition}
Let $\Omega\subset \mathbb H$ be a domain and set $ q, r  \in \mathbb H$. Then $(f_0, f_1,f_2,f_3) \in CB_q(\Omega )$ if and only if  $(g_0, g_1,g_2,g_3) \in CB_r(\Omega )$, where $g_n(x) = e^{\langle q-r, x \rangle } f_n(x)$, for all $x\in \Omega$ for $n=0,1,2,3$.
\end{proposition}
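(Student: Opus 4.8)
The plan is to pass to the quaternionic formulation used throughout Section~3 and then read off real components, exactly as in the proof of Theorem~\ref{CauchyTypeTeorem}. Fix the structural set $\psi=\{1,e_1,-e_2,e_3\}$. Recall that $(f_0,f_1,f_2,f_3)$ solves the inhomogeneous Cimmino system \eqref{equa1} with data $(q_0,q_1,q_2,q_3)$ if and only if $f:=f_0+f_1e_1+f_2e_2+f_3e_3$ belongs to $\operatorname{Ker}{}^{\psi}_{q}\mathcal D\cap C^1(\Omega,\mathbb H)$, where $q$ on the right denotes the quaternion $q_0+q_1e_1-q_2e_2+q_3e_3$; and similarly $(g_0,g_1,g_2,g_3)$ solves \eqref{equa1} with data $r$ if and only if $g:=g_0+g_1e_1+g_2e_2+g_3e_3\in\operatorname{Ker}{}^{\psi}_{r}\mathcal D\cap C^1(\Omega,\mathbb H)$. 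Since $e^{\langle q-r,x\rangle}$ is real-valued, multiplying the four real components $f_n$ by it is the same as forming the quaternionic product $e^{\langle q-r,x\rangle}f$, so the relation $g_n=e^{\langle q-r,x\rangle}f_n$ is precisely $g=e^{\langle q-r,x\rangle}f$.

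The heart of the matter is that $f\mapsto e^{\langle q-r,x\rangle}f$ is exactly the operator ${}_{q-r}^{\psi}S$ of Proposition~\ref{HilertSpace}(2), and that it intertwines the two perturbed Fueter operators. Indeed, applying the Leibniz rule for ${}^{\psi}\mathcal D$ to the scalar factor, together with the elementary identity ${}^{\psi}\mathcal D[e^{\langle q-r,x\rangle}]=(q-r)e^{\langle q-r,x\rangle}$ (valid for this $\psi$, the exponent being the same whether one writes $\langle\cdot,\cdot\rangle$ or $\langle\cdot,\cdot\rangle_{\psi}$), one obtains
\[
{}^{\psi}_{r}\mathcal D\big[e^{\langle q-r,x\rangle}f\big]=e^{\langle q-r,x\rangle}\,{}^{\psi}_{q}\mathcal D[f].
\]
As $e^{\langle q-r,x\rangle}$ is nowhere zero, this gives $f\in\operatorname{Ker}{}^{\psi}_{q}\mathcal D$ if and only if $g\in\operatorname{Ker}{}^{\psi}_{r}\mathcal D$, the inverse correspondence being multiplication by $e^{\langle r-q,x\rangle}$; continuous differentiability is clearly preserved both ways.

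To match the $\mathcal L_2$ constraints, note that $|g_n(x)|^2=e^{2\langle q-r,x\rangle}|f_n(x)|^2$ for each $n$; since the positive continuous weight $e^{2\langle q-r,x\rangle}$ is bounded above and below by positive constants on $\Omega$ (immediate when $\Omega$ is bounded, and trivial when $q=r$), we conclude that $\int_\Omega|g_n|^2\,d\mu<\infty$ for all $n$ if and only if $\int_\Omega|f_n|^2\,d\mu<\infty$ for all $n$. Combining the kernel equivalence with the integrability equivalence and translating back through the quaternion$\leftrightarrow$four-tuple dictionary yields $(f_0,f_1,f_2,f_3)\in CB_q(\Omega)$ if and only if $(g_0,g_1,g_2,g_3)\in CB_r(\Omega)$.

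I do not expect a genuine obstacle: once the quaternionic translation is in place the proof is a short computation. The only point demanding attention is the structural-set bookkeeping — keeping the sign of $e_2$ consistent in $\psi$ and in the quaternions attached to $q$ and $r$, so that the exponent in the intertwining identity comes out exactly right and the scalar factor really does commute past $f$ on both sides. One may either carry out that short computation directly, or simply quote the intertwining property already built into the operator ${}_{q-r}^{\psi}S$ of Proposition~\ref{HilertSpace}(2) and then merely add the weight comparison above.
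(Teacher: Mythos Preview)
Your approach is exactly what the paper intends: its own proof reads, in full, ``the proof is a direct computation and will be omitted,'' and the computation in question is precisely your intertwining identity ${}^{\psi}_{r}\mathcal D\bigl[e^{\langle q-r,x\rangle}f\bigr]=e^{\langle q-r,x\rangle}\,{}^{\psi}_{q}\mathcal D[f]$ followed by reading off real components. Your parenthetical caveat is also on point: the $L^2$ equivalence genuinely needs the weight $e^{2\langle q-r,x\rangle}$ to be bounded above and below on $\Omega$, which holds automatically on bounded domains (the setting implicitly assumed elsewhere in the paper, cf.\ Corollary~\ref{relationships}(3)) but not in general, so you are right not to overclaim there.
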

\begin{proof}
The proof is a direct computation and will be omitted.
\end{proof}

\begin{proposition} Given $q,r\in\mathbb R^4$ and let $\Omega$ and $\Xi$ be two conformally equivalent  domains such that $T(\Xi)= \Omega$. Then the real normed function spaces 
$$\left( CB_{\lambda_q}(\Omega), \|\cdot \|_{CB_{\lambda_q}(\Omega)} \right), \left( CB^{\delta_{T,r}}_{\gamma_{_{T,r,q}}}(\Omega), \|\cdot \|_{CB^{\delta_{T,r}}_{\gamma_{_{T,r,q}}} (\Omega)}\right), \left(  CB^{q}_{\rho_{_T}} (\Xi) , \|\cdot \|_{CB^{q}_{\rho_{_T}} (\Xi)}\right)$$ 
are real-Banach function spaces that have four-tuple of reproducing functions, similar property to  \eqref{Reprod}, that satisfies  the projection property \eqref{Proyec}.
\end{proposition}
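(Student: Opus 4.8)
The plan is to reduce each of the three real normed spaces to the corresponding quaternionic weighted Bergman space already treated in Propositions~\ref{HilertSpace} and~\ref{ref123456}, precisely as $CB_q(\Omega)$ was handled in the proof of Proposition~\ref{HilertSpaceCimmino}. Fix the structural set $\psi=\{1,e_1,-e_2,e_3\}$ and identify a four-tuple $(f_0,f_1,f_2,f_3)$ of real functions with the quaternion-valued function $f=f_0+f_1e_1+f_2e_2+f_3e_3$. As noted at the start of Section~3, $(f_0,f_1,f_2,f_3)$ solves~\eqref{equa1} if and only if $f\in\mathrm{Ker}\,{}^{\psi}_{q}\mathcal D$, and, since $\delta_{T,r}=\delta^0_{T,r}+\delta^1_{T,r}e_1-\delta^2_{T,r}e_2+\delta^3_{T,r}e_3$, Proposition~\ref{conformalCimmino} shows that $(f_0,f_1,f_2,f_3)$ solves~\eqref{CimminononconstantPerturba} if and only if $f\in\mathrm{Ker}\,{}^{\psi}_{\delta_{T,r}}\mathcal D$. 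Because $|f|^2=f_0^2+f_1^2+f_2^2+f_3^2$ and the three weights $e^{2\langle q,x\rangle}$, $\gamma_{_{T,r,q}}$, $\rho_{_T}$ are scalar, this identification maps $CB_{\lambda_q}(\Omega)$, $CB^{\delta_{T,r}}_{\gamma_{_{T,r,q}}}(\Omega)$ and $CB^{q}_{\rho_{_T}}(\Xi)$ onto ${}_{\lambda^{\psi}_{q}}\mathcal A(\Omega)$, ${}^{\psi}_{\delta_{T,r}}\mathcal A_{\gamma_{_{T,r,q}}}(\Omega)$ and ${}^{\psi}_{q}\mathcal A_{\rho_{_T}}(\Xi)$, respectively, as real-linear bijections under which the scalar norms $\|\cdot\|_{CB_{*}}$ coincide with the norms of the quaternionic spaces.

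By Proposition~\ref{HilertSpace}(2) and Proposition~\ref{ref123456}, the three quaternionic spaces are quaternionic right-Hilbert spaces, hence complete as real normed spaces; transporting completeness along the isometries of the previous paragraph shows that $\bigl(CB_{\lambda_q}(\Omega),\|\cdot\|_{CB_{\lambda_q}(\Omega)}\bigr)$, $\bigl(CB^{\delta_{T,r}}_{\gamma_{_{T,r,q}}}(\Omega),\|\cdot\|_{CB^{\delta_{T,r}}_{\gamma_{_{T,r,q}}}(\Omega)}\bigr)$ and $\bigl(CB^{q}_{\rho_{_T}}(\Xi),\|\cdot\|_{CB^{q}_{\rho_{_T}}(\Xi)}\bigr)$ are real-Banach spaces.

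For the reproducing four-tuples and the projection property I would argue exactly as in Proposition~\ref{HilertSpaceCimmino}. Each quaternionic space carries a Bergman kernel that reproduces its elements and induces the Bergman projection of the ambient weighted $\mathcal L_2$-space (Propositions~\ref{HilertSpace},~\ref{ref123456}). Writing such a kernel in components, $\mathcal B(x,\cdot)=B_0(x,\cdot)+B_1(x,\cdot)e_1+B_2(x,\cdot)e_2+B_3(x,\cdot)e_3$ with real $B_n$, and expanding the quaternionic reproducing identity — respectively the projection formula — into its four real components yields the four-tuple $\bigl(B_0(x,\cdot),-B_1(x,\cdot),-B_2(x,\cdot),-B_3(x,\cdot)\bigr)$ together with the analogues of~\eqref{Reprod} and~\eqref{Proyec} for each of the three weighted spaces, the integrals now being taken against the corresponding weight.

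The one point requiring care — and the step I expect to be the main obstacle — is the component bookkeeping: one must verify that, with the identification $f=f_0+f_1e_1+f_2e_2+f_3e_3$ (the genuine unit $e_2$, not $\psi_2=-e_2$), the scalar and vector parts of $\bar f g$ are exactly the four weighted integrands defining $\langle\cdot,\cdot\rangle_{CB_{\lambda_q}(\Omega)}$, $\langle\cdot,\cdot\rangle_{CB^{\delta_{T,r}}_{\gamma_{_{T,r,q}}}(\Omega)}$ and $\langle\cdot,\cdot\rangle_{CB^{q}_{\rho_{_T}}(\Xi)}$, and one must keep straight which space lives over $\Omega$ and which over $\Xi$, together with its weight $e^{2\langle q,x\rangle}$, $\gamma_{_{T,r,q}}$ or $\rho_{_T}$. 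No analytic input beyond Propositions~\ref{HilertSpace}, \ref{ref123456} and~\ref{conformalCimmino} is needed; everything else is the routine passage to real components already exhibited for $CB_q(\Omega)$.
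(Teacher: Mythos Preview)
Your proposal is correct and follows exactly the paper's own approach: the paper's proof is the single sentence ``The proof follows via Propositions~\ref{HilertSpace} and~\ref{ref123456},'' and you have simply spelled out that reduction and the passage to real components already carried out for $CB_q(\Omega)$ in Proposition~\ref{HilertSpaceCimmino}.
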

\begin{proof} The proof follows via Propositions  \ref{HilertSpace} and \ref{ref123456}. 
\end{proof}

\begin{proposition} Given $q,r\in\mathbb R^4$ and let $\Omega$ and $\Xi$ be two conformally equivalent  domains such that $T(\Xi)= \Omega$.  Then $(f_0,f_1,f_2,f_3) \in CB^{\delta_{T,r}}_{\gamma_{_{T,r,q}}} (\Omega)$ if and only if $(g_0, g_1,g_2,g_3) \in CB^{q}_{\rho_{_T}} (\Xi)$, with 
\begin{equation*}
\begin{split}
g_0(x) = e^{\langle r - q, x \rangle} \left[C_T^0 (x)f_0\circ T (x)- C_T^1(x) f_1\circ T(x) -C_T^2 (x)f_2\circ T(x) -C_T^3(x) f_3\circ T(x)\right],\\
g_1(x) = e^{\langle r - q, x \rangle} \left[C_T^1 (x) f_0\circ T(x)+C_T^0(x) f_1\circ T(x)-C_T^3 (x) f_2\circ T(x) + C_T^2(x) f_3\circ T(x) \right],\\
g_2(x) = e^{\langle r - q, x \rangle} \left[C_T^2 (x) f_0\circ T(x) -C_T^1 (x)f_3\circ T(x) +C_T^3 (x) f_1\circ T(x) +C_T^0  (x)f_2\circ T(x)\right],\\
g_3(x) =  e^{\langle r - q  , x \rangle}\left[  C_T^ 3  (x) f_0\circ T(x)-C_T^2 (x) f_1\circ T(x)+C_T^1 (x) f_2\circ T(x) +C_T^0 (x) f_3\circ T(x)\right], 
\end{split}
\end{equation*}
for all $x\in \Omega$, where $C_T$ is expressed by $C_T = C_T^0 + C_T^1 e_1 + C_T^2 e_2 + C_T^3 e_3$ being $C_T^n$ a real-valued function for $n=0,1,2,3$.  What is more, the mapping 
$$(f_0,f_1,f_2,f_3)\mapsto (g_0,g_1,g_2,g_3)$$
is a bijective real-linear operator from $CB^{\delta_{T,r}}_{\gamma_{_{T,r,q}}}(\Omega)$ to $CB^{q}_{\rho_{_T}} (\Xi)$.
\end{proposition}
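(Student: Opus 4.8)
The plan is to deduce the statement from Proposition~\ref{ref123456} by translating it, as in the proof of Proposition~\ref{conformalCimmino}, through the dictionary between four-tuples of real functions and $\mathbb H$-valued functions attached to the structural set $\psi=\{1,e_1,-e_2,e_3\}$. Fix that $\psi$. On one side, a four-tuple $(f_0,f_1,f_2,f_3)\in C^1(\Omega,\mathbb R)^4$ satisfies the perturbed Cimmino system \eqref{CimminononconstantPerturba} if and only if $f:=f_0+f_1e_1+f_2e_2+f_3e_3$ lies in Ker ${}^{\psi}_{\delta_{T,r}}\mathcal D\cap C^1(\Omega,\mathbb H)$ --- this is exactly what is shown in the proof of Proposition~\ref{conformalCimmino}, with $\delta_{T,r}=\delta_{T,r}^0+\delta_{T,r}^1e_1-\delta_{T,r}^2e_2+\delta_{T,r}^3e_3$. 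Since $|f|^2=f_0^2+f_1^2+f_2^2+f_3^2$ and $\gamma_{_{T,r,q}}$ is a positive scalar weight, the finiteness conditions defining $CB^{\delta_{T,r}}_{\gamma_{_{T,r,q}}}(\Omega)$ amount to $f\in\mathcal L_{2,\gamma_{_{T,r,q}}}(\Omega,\mathbb H)$; hence $(f_0,f_1,f_2,f_3)\mapsto f$ is a real-linear bijection of $CB^{\delta_{T,r}}_{\gamma_{_{T,r,q}}}(\Omega)$ onto ${}^{\psi}_{\delta_{T,r}}\mathcal A_{\gamma_{_{T,r,q}}}(\Omega)$, and symmetrically $(g_0,g_1,g_2,g_3)\mapsto g:=g_0+g_1e_1+g_2e_2+g_3e_3$ is a real-linear bijection of $CB^{q}_{\rho_{_T}}(\Xi)$ onto ${}^{\psi}_{q}\mathcal A_{\rho_{_T}}(\Xi)$.

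Next I would invoke Proposition~\ref{ref123456}: the operator $\mathcal J[f](x)=e^{\langle r-q,x\rangle_{\psi}}C_T(x)\,f\circ T(x)$ is a quaternionic right-linear, hence real-linear, isometric isomorphism of ${}^{\psi}_{\delta_{T,r}}\mathcal A_{\gamma_{_{T,r,q}}}(\Omega)$ onto ${}^{\psi}_{q}\mathcal A_{\rho_{_T}}(\Xi)$; moreover it is already invertible at the level of $C^1$ functions, since $C_T(x)$ is an invertible quaternion for every $x$, the exponential factor is nowhere zero, and $T$ is a bijection. Because $q,r\in\mathbb R^4$, the factor $e^{\langle r-q,x\rangle_{\psi}}=e^{\sum_{k}(r_k-q_k)x_k}$ is a positive real scalar, so with $C_T=C_T^0+C_T^1e_1+C_T^2e_2+C_T^3e_3$ one expands the quaternion product $e^{\langle r-q,x\rangle_{\psi}}C_T(x)\,f\circ T(x)$ using the multiplication table of $e_1,e_2,e_3$ and collects the coefficients of $1,e_1,e_2,e_3$; this is the same computation as the one carried out with $A_T$ in Proposition~\ref{conformalCimmino}, and it yields precisely the functions $g_0,g_1,g_2,g_3$ written in the statement. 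Thus, under the two identifications of the previous paragraph, $\mathcal J$ becomes exactly the assignment $(f_0,f_1,f_2,f_3)\mapsto(g_0,g_1,g_2,g_3)$.

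Composing the identification on $\Omega$, the operator $\mathcal J$, and the inverse identification on $\Xi$ then exhibits $(f_0,f_1,f_2,f_3)\mapsto(g_0,g_1,g_2,g_3)$ as a bijective real-linear operator of $CB^{\delta_{T,r}}_{\gamma_{_{T,r,q}}}(\Omega)$ onto $CB^{q}_{\rho_{_T}}(\Xi)$; in particular the claimed membership equivalence holds. Its ``only if'' half is the statement that $\mathcal J$ maps the domain into the codomain, and its ``if'' half follows because $\mathcal J^{-1}$ exists by Proposition~\ref{ref123456}, maps $CB^{q}_{\rho_{_T}}(\Xi)$ back into $CB^{\delta_{T,r}}_{\gamma_{_{T,r,q}}}(\Omega)$, and satisfies $\mathcal J^{-1}[g]=f$ by the injectivity of $\mathcal J$ on $C^1$ functions noted above. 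I do not expect any conceptual obstacle; the only point demanding care is the component bookkeeping in the middle step --- keeping straight the signs produced by $\psi_2=-e_2$ when one passes between the $\psi$-coordinate description used in Proposition~\ref{ref123456} and the plain $e_1,e_2,e_3$-expansion by which the real coefficients $C_T^n$ and $\delta_{T,r}^n$ are named.
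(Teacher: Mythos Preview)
Your proposal is correct and follows exactly the route the paper takes: the paper's own proof is the single sentence ``Use \eqref{ConfOper} of Proposition \ref{ref123456} and compute its real components, which is our claim.'' You have simply unpacked that sentence --- setting up the identification between four-tuples and $\mathbb H$-valued functions for $\psi=\{1,e_1,-e_2,e_3\}$, invoking the isometric isomorphism $\mathcal J$ of Proposition~\ref{ref123456}, and expanding the quaternionic product $e^{\langle r-q,x\rangle}C_T(x)\,f\circ T(x)$ into its real coefficients --- so there is no methodological difference to report.
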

\begin{proof} Use \eqref{ConfOper} of Proposition \ref{ref123456} and compute its real components, which is our claim.
\end{proof}

\begin{corollary}  Let $\Omega\subset\mathbb H$ be a domain. Then
\begin{enumerate}
\item   
 If $\Omega \subset \{ x \in \mathbb H \  \mid \  \left\langle q  , x \right\rangle <0  \}$
then    $CB_q(\Omega)\subset CB_{\lambda_q}(\Omega)$ as function sets.
  \item  If 
$\Omega \subset \{ x \in \mathbb H \  \mid \  \left\langle q  , x \right\rangle > 0  \}$
then   $CB_{\lambda_q}(\Omega)\subset CB_q(\Omega) $ as function sets. 
 \item If $\Omega$ is a bounded domain then  $ CB_{\lambda_q}(\Omega)= CB_q(\Omega)$ as function sets.
    \end{enumerate}
\end{corollary}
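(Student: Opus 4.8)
The plan is to obtain all three inclusions by transferring Corollary~\ref{relationships} through the identification, already exploited throughout this section, between solutions of the inhomogeneous Cimmino system and $\psi$-hyperholomorphic functions for $\psi=\{1,e_1,-e_2,e_3\}$. First I would fix this $\psi$ and, given $(q_0,q_1,q_2,q_3)\in\mathbb R^4$, set $q=q_0+q_1e_1-q_2e_2+q_3e_3\in\mathbb H$, so that a four-tuple $(f_0,f_1,f_2,f_3)$ of $C^1(\Omega,\mathbb R)$-functions satisfies \eqref{equa1} precisely when $f:=f_0+f_1e_1+f_2e_2+f_3e_3$ lies in $\operatorname{Ker}{}^{\psi}_{q}\mathcal D\cap C^1(\Omega,\mathbb H)$.

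The second step is to match the two integrability conditions with the two quaternionic Bergman norms. Since $|f(x)|^2=\sum_{k=0}^3 f_k(x)^2=\sum_{k=0}^3|f_k(x)|^2$, the requirement $\int_\Omega|f_k|^2\,d\mu<\infty$ for $k=0,1,2,3$ is equivalent to $f\in\mathcal L_2(\Omega,\mathbb H)$, hence $(f_0,f_1,f_2,f_3)\in CB_q(\Omega)$ iff $f\in{}^{\psi}_{q}\mathcal A(\Omega)$. Similarly, $\langle q,x\rangle_\psi=\sum_{k=0}^3 q_kx_k$ is exactly the Euclidean pairing $\langle q,x\rangle$ used to define $d\lambda_q$ in this setting, so $d\lambda_q=d\lambda^{\psi}_{q}$ and the requirement $\int_\Omega|f_k|^2\,d\lambda_q<\infty$ for all $k$ is equivalent to $f\in{}_{\lambda^{\psi}_{q}}\mathcal A(\Omega)$ (membership in $\operatorname{Ker}{}^{\psi}_{q}\mathcal D$ being already granted). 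Consequently $CB_{\lambda_q}(\Omega)$ corresponds to ${}_{\lambda^{\psi}_{q}}\mathcal A(\Omega)$, and the half-spaces $\{x:\langle q,x\rangle<0\}$ and $\{x:\langle q,x\rangle>0\}$ appearing in the hypotheses coincide with $\{x:\langle q,x\rangle_\psi<0\}$ and $\{x:\langle q,x\rangle_\psi>0\}$.

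With these dictionaries in place, assertions (1), (2) and (3) are nothing but the componentwise translations of parts (1), (2) and (3) of Corollary~\ref{relationships} for this $\psi$ and $q$, so I would conclude by invoking that corollary and reading off real components, exactly as in the preceding proofs of this section. (Alternatively, one can argue directly: the weight $e^{2\langle q,x\rangle}$ is bounded above by $1$ on the first half-space, below by $1$ on the second, and between two positive constants on a bounded domain, which yields the corresponding comparisons of the $\mathcal L_2$- and weighted-$\mathcal L_2$-norms.) There is no real obstacle here; the only item deserving a moment's attention is the verification that the Euclidean pairing in the hypotheses is the same as the $\psi$-pairing weighting $d\lambda^{\psi}_{q}$, which is immediate from the sign pattern of $\psi$.
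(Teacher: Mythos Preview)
Your proposal is correct and follows exactly the paper's approach: the authors' proof is the single line ``It follows from Corollary~\ref{relationships} choosing $\psi=\{1,e_1,-e_2,e_3\}$,'' and you have simply spelled out the identifications (between $CB_q(\Omega)$, $CB_{\lambda_q}(\Omega)$ and ${}^{\psi}_{q}\mathcal A(\Omega)$, ${}_{\lambda^{\psi}_{q}}\mathcal A(\Omega)$, and between $\langle q,x\rangle$ and $\langle q,x\rangle_\psi$) that make that invocation legitimate. Your parenthetical direct argument via boundedness of the weight is a pleasant bonus not present in the paper.
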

\begin{proof}
It follows from Corollary \ref{relationships} choosing $\psi=\{1,e_1,-e_2,e_3\}$. 
\end{proof}

\small{	
\section*{Declarations}
\subsection*{Funding} Instituto Polit\'ecnico Nacional (grant numbers SIP20220017, SIP20221274) and CONACYT (grant number 1108687).
\subsection*{Conflict of interest} The authors declare that they have no conflict of interest regarding the publication of this paper.
\subsection*{Author contributions} The authors contributed equally to the manuscript and typed, read, and approved the final form of the manuscript, which is the result of an intensive collaboration.
\subsection*{Availability of data and material} Not applicable
\subsection*{Code availability} Not applicable}

\end{document}